\numberwithin{equation}{section}
\newtheorem{thm}{Theorem}[section]
\newtheorem{defi}{Definition}[section]
\newtheorem{prop}{Proposition}[section]
\newtheorem{lem}{Lemma}[section]
\newtheorem{cor}{Corollary}[section]
\newtheorem{rem}{Remark}[section]
\newtheorem{ex}{Example}[section]
\newcommand{\R}{\mathbb{R}} 
\newcommand{\N}{\mathbb{N}}
\newcommand{\si}{s^{(0)}}
\title[Persistence criteria  for a chemostat model]{Persistence criteria  for a chemostat with variable nutrient input and variable washout with delayed response in growth}
\thanks{e-mail: mrodriguezcartabia@dm.uba.ar}
\begin{document} 

\maketitle

\centerline{\scshape Mauro Rodriguez Cartabia}
\medskip
{\footnotesize



\begin{abstract}
 We study a general single-species chemostat model  with non-autonomous input and washout, and a discrete-time delay between consumption of the nutrient and growth. The goal of this article is to provide sufficient and necessary conditions for persistence. The persistence criteria obtained here extend previous works that have studied  the autonomous model with delay or the non-autonomous model without delay. Furthermore, in the particular case of periodic nutrient input and the same periodic washout with delayed response in growth, the persistence criteria are average criteria. We remark in no  case we need to impose any restrictions on the size of the delay. 
\end{abstract}


\bigskip
 
%
 
\section{Introduction}

The chemostat is a manufactured device in  which  microorganisms are cultivated in a liquid by continuous  addition of fresh medium. The input contains in excess all the nutrients necessary for the growth of the organisms except for one that we denote  by \emph{nutrient}. Meanwhile,  the medium is continuously pumped out to keep the volume constant. The washout contains nutrient,  organisms,  and, in some cases, products produced by these  organisms.  We refer to \cite{smith1995theory} for a complete description.

To study the chemostat with  variable nutrient input and variable washout with  delayed response in growth we follow the model presented in \cite{amster2020existence} that combines non-autonomous (periodic) inputs and a delay  between  consumption and growth. In this article we firstly work with general inputs, without imposing any periodicity. Therefore, given a constant $\tau\geq 0$, consider the system  
\begin{equation}\label{sistemaprincipal}
  \left\{\begin{aligned}
  s'(t)&=\left( \si(t)-s(t)\right)D(t)-x(t)p(s(t)),& t\geq 0,\\
  x'(t)&=-D(t)x(t)+x(t-\tau)p(s(t-\tau))e^{-\int_{t-\tau}^tD(r)\, dr},& t\geq 0,
  \end{aligned}
  \right.
\end{equation}
with initial conditions
$$\left(s,x \right)\Big|_{[-\tau,0]}=\left(s^{in},x^{in} \right).$$
These initial conditions must be non-negative time functions defined over the interval $[-\tau,0]$.  Here $s$ represents the concentration of the nutrient and $x$ denotes the concentration of the organism. Furthermore, $\si$ and $D$ are non-negative time continuous functions representing the concentration of the input nutrient and the washout rate, respectively. As usual, we assume the function $p:[0,\infty)\to[0,\infty)$ is continuously differentiable with $p(0)=0$ and $p'(s)>0$ for all $s\geq 0$. This function can be seen as a generalization of the Monod model given by
$$p(s)=\frac{ms}{a+s}$$
where $m>0$ is the maximal growth rate and $a>0$ is the half-saturation constant. Here $\tau$ represents a constant delayed time between consumption of the nutrient and growth. Furthermore, $\exp(-\int_{t-\tau}^tD)$ is the biomass factor consumed at time $t-\tau$ that remains at time $t$. For an explanation of this model see \cite{ellermeyer2003theoretical, smith2011introduction}.

In this work we study the persistence of solutions  in the chemostat. This implies finding conditions, necessary and sufficient, to ensure that the concentration of the organism \emph{remains away from zero} for all  positive  times. To deepen in this concept see \cite{freedman1990persistence}. There are several studies of persistence in the chemostat, we focus on two. 
A  crucial work by Ellermayer was done in \cite{ellermeyer1994competition} where he proved persistence criteria for System \ref{sistemaprincipal}  in the case of $\si$ and $D$ constant positive functions. Furthermore, in \cite{ellermeyer2001persistence}, Ellermeyer \emph{et al.} provided persistence criteria for System \ref{sistemaprincipal} in the case of variable input and constant washout with instantaneous response in growth ($\tau=0$). 

Despite the relevance of this matter, there is a lack of analysis for persistence criteria in a more general case, such as the one proposed in System \ref{sistemaprincipal}. Furthermore, this lack includes the  particular case with $\si$ and $D$ periodic functions. Therefore the main result of this work is  Theorem \ref{persistencia} and, to the best of our knowledge, is the first one that provides persistence criteria for System \ref{sistemaprincipal}. To sketch it consider $z$ a solution of System \ref{sistemaprincipal} in the absence of organisms (called \emph{washout solution}).   The  theorem states that persistence occurs if and only if over large intervals the integral of $p(z)  $ weighted by a function $\varphi$ is larger than the integral of $D$. Here $\varphi$ depends on the system and is independent of any particular solution (see \eqref{funcionphi} for a precise definition). Analogously, in the case of periodic input and washout, $\varphi$ is periodic and Theorem \ref{persistenciaperiodica} states that the persistence occurs if and only if the average of $p(z)\varphi $ is larger than the average of $D$.

The rest of this paper is organized as follows. In Section \ref{seccionprincipal} we 
define the concepts mentioned above, and state Theorem \ref{persistencia}. In Section \ref{seccioncasos}, we relate  this theorem to certain particular cases, including  Theorem \ref{persistenciaperiodica}. We also emphasize the connection with previous works mentioned. Finally, Section \ref{pruebas} is devoted to proving Theorem \ref{persistencia} and Section \ref{otraspruebas} is devoted to proving the results of Section \ref{seccioncasos}.  

\section{Main result}\label{seccionprincipal}

Since $s$ and $x$ are concentrations, they must always be non-negative functions (we show this in Section \ref{pruebas}). Furthermore, we are interested in  organism-positive solutions. This motivates the following definition. 

\begin{defi}[Non-null initial conditions]
We say  $(s^{in},x^{in})$ is a \emph{non-null initial condition}  if its  time functions are non-negative, and  either $x^{in}(0)>0$ or there exists $t_*\in[-\tau,0]$ such that $s^{in}(t_*)>0$ and $x^{in}(t_*)>0$. 
\end{defi}
 
The original chemostat models assume instant  growth after consumption ($\tau =0$), but for several biological systems this assumption seems unnatural, see 
\cite[Chapter 1]{smith2011introduction} for instance. Referring to experimental works to motivate the introduction of time delay, we remark \cite{collos1986time} where  the author reviewed several studies on the delayed response in the  growth of algae.  More recently,  Ellermeyer \emph{et al.} in \cite{ellermeyer2003theoretical} studied \emph{Escherichia coli}  assuming $\tau=20$ minutes.

Furthermore, for  biological models it is always fundamental to find criteria for the persistence of the organisms over an indefinitely long period of time (see \cite{waltman1991brief} for instance). This is the main concern  of this article.

\begin{defi}[Persistence]
System \ref{sistemaprincipal} is said to be \emph{persistent} if  
$$\liminf_{t\to\infty}x(t)>0$$
for every trajectory with  non-null initial conditions.
\end{defi}

\subsection{Persistence criteria}

Firstly, we observe that if the integral of the  function   $D$ is not infinity then     System \ref{sistemaprincipal}  is \emph{always} persistent. To see this  consider a solution with non-null initial condition and fix $t_0$ such that $x(t_0)>0$. Then  
\begin{align*}
  \frac{d}{dt}\left(x(t)e^{ \int_{t_0}^tD(r)\,dr}\right)&= \left(x'(t)+D(t)x(t)\right) e^{ \int_{t_0}^{t }D(r)\,dr}\\
  &=  x(t-\tau)p(s(t-\tau)) e^{ \int_{t_0}^{t-\tau}D(r)\,dr} \\
  &\geq 0,  
\end{align*}
which means that
$$x(t)\geq x(t_0)e^{-\int_{t_0}^tD(r)\,dr}\geq x(t_0)e^{-\int_{t_0}^\infty D(r)\,dr}>0 $$
for all $t\geq t_0$ and the system is persistent. Therefore to study persistence criteria  it seems natural to assume the function $D$ is \emph{large enough} such that its integral diverges.

In order to state Theorem \ref{persistencia} we need to introduce some functions. Throughout  this article we  repeatedly denote by $z$ a solution of the equation 
\begin{equation}\label{derivadaz}   
  z'(t) =\left( \si(t)-z(t)\right)D(t)    
\end{equation}
for $t\geq 0$, with $z(0)> 0$. As mentioned, this is the solution of the nutrient in System \ref{sistemaprincipal} in absence of organisms and  is known as the \emph{washout solution}. Note that $z$ is given by
\begin{equation}\label{formaz}
  z(t) =z(0)e^{-\int_0^tD(r)\,dr}+\int_0^t\si(h) D(h)e^{-\int_h^tD(r)\, dr}\, dh.
\end{equation}

We also work with 
\begin{equation}\label{funcionc}
  c(t):=c(0)e^{-\int_0^tD(r)\,dr}+\int_{-\tau}^{t-\tau}c(h)p(z(h))e^{-\int_h^tD(r)\,dr}\,dh,\, t\geq 0
\end{equation} 
and we assume that $c\geq 0$ in $[-\tau,0]$ with $c(0)>0$. Observe that $c$ is solution of the linear equation 
$$c'(t)=-D(t)c(t)+c(t-\tau)p(z(t-\tau))e^{-\int_{t-\tau}^tD(r)\,dr}.$$
Since $c(t)>0$ for all $t\geq 0$ we can define the function 
\begin{equation}\label{funcionphi}
  \varphi(t):=\frac{c(t)}{c(t+\tau)}e^{-\int_t^{t+\tau}D(r)\,dr},\quad t\geq 0 
\end{equation}
which is  explicitly given by 
\begin{equation*} 
  \varphi(t) = \frac{c(0)e^{-\int_0^tD(r)\,dr}+\int_{-\tau}^{t-\tau}c(h)p(z(h))e^{-\int_h^tD(r)\,dr}\,dh}{c(0)e^{-\int_0^tD(r)\,dr}+\int_{-\tau}^{t}c(h)p(z(h))e^{-\int_h^tD(r)\,dr}\,dh}.
\end{equation*}
Observe that multiples of $c$ result in the same $\varphi$ and that the image of $\varphi$ is contained in $(0,1]$. Furthermore, the following result,  which provides sufficient and necessary conditions for persistence, shows that this function $\varphi$ is \emph{inherent} to System \ref{sistemaprincipal} in the sence that it relevance is independet on the choince of $z(0)$ or $c$ over $[-\tau,0]$. 

Therefore, we can state the main result of this article.


\begin{thm}[Persistence criteria]\label{persistencia} 
Let $\tau$ be any non-negative constant and assume the function  $\si$ is upper and lower  bounded by positive constants, $D$ is upper   bounded by a positive constant, and  the integral of $D$ diverges. Furthermore, let $z$ be solution of \eqref{derivadaz} and  $\varphi$ defined in   \eqref{funcionphi}. 

Therefore  System  \ref{sistemaprincipal} is persistent if and only if there are positive constants   $\eta$ and $T$ such that 
\begin{equation}\label{condicion}
  \int_{t_1}^{t_2} p(z(t-\tau)) \varphi(t-\tau)\, dt> \int_{t_1}^{t_2}\left( D(t)+\eta\right)  \, dt
\end{equation}
for all $t_1> T$, $t_2-t_1> T$.
\end{thm}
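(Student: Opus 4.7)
The heart of the proof is a direct identity linking $c$ and $\varphi$: substituting the definition of $\varphi$ into the linear equation satisfied by $c$ one obtains
\[
\frac{c'(t)}{c(t)} = p(z(t-\tau))\,\varphi(t-\tau) - D(t),
\]
since $\varphi(t-\tau)\,c(t) = c(t-\tau)\,e^{-\int_{t-\tau}^{t}D(r)\,dr}$. Integrating between $t_1$ and $t_2$ converts condition \eqref{condicion} into the uniform exponential growth estimate
\[
\frac{c(t_2)}{c(t_1)} \geq e^{\eta(t_2-t_1)}\qquad\text{for all } t_1>T,\ t_2-t_1>T.
\]
Thus the theorem reduces to the equivalence between persistence of \eqref{sistemaprincipal} and eventual uniform exponential growth of the linearization $c$ of the $x$-equation along the washout trajectory $z$.

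The next step is to pass to the variable $y(t):=x(t)/c(t)$. A computation combining the $x$- and $c$-equations with the same identity for $\varphi$ yields the clean transformed equation
\[
y'(t) = \varphi(t-\tau)\bigl[p(s(t-\tau))\,y(t-\tau) - p(z(t-\tau))\,y(t)\bigr].
\]
Since $(z-s)'(t) + D(t)(z-s)(t) = x(t)\,p(s(t))$, the gap $z-s$ is controlled by $x$ asymptotically, and because $p$ is Lipschitz on the (forward-invariant) bounded region carved out by the a priori bounds on $(s,x)$, the nonlinear correction $\varphi(t-\tau)\bigl[p(z(t-\tau))-p(s(t-\tau))\bigr]y(t-\tau)$ is of order $\sup x$. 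In the washout limit $s\to z$, the equation collapses to $y'(t)=\varphi(t-\tau)\,p(z(t-\tau))\bigl[y(t-\tau)-y(t)\bigr]$, which preserves constant solutions; hence near the washout, $x$ and $c$ differ by an essentially bounded multiplicative factor, and the growth of $c$ dictates the growth of $x$.

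For sufficiency I would argue by contradiction: if \eqref{condicion} holds but a non-null trajectory satisfies $\liminf x(t)=0$ along some $t_n\to\infty$, the smallness of $x$ preceding each $t_n$ forces the correction term in the $y$-equation to be uniformly small on windows of length $\tau$, so on those windows $y$ is approximately governed by the constant-preserving equation. Combined with the exponential lower bound on $c(t_n)/c(t_n-T)$, this keeps $y(t_n)$ bounded below by a positive constant, so $x(t_n)=c(t_n)y(t_n)$ cannot approach zero, a contradiction. For the necessity direction I would take the contrapositive: failure of \eqref{condicion} produces, for every $\eta>0$ and every $T>0$, arbitrarily late long windows on which $c$ has accumulated only negligible growth; initializing $x^{in}=\epsilon c$ on $[-\tau,0]$ and $s^{in}=z^{in}$, continuous dependence and the smallness of $\epsilon$ ensure that the nonlinear solution remains a small perturbation of $(z,\epsilon c)$, so that $x$ visits arbitrarily small values along those windows, violating persistence.

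The main obstacle I anticipate is upgrading the pointwise approximation $s\approx z$ to uniform-in-time control of the correction term in the transformed $y$-equation, uniformly in the starting instant $t_1$: the non-autonomous, retarded nature of the system precludes a direct spectral or characteristic-root argument, so a careful iterative estimate on successive intervals of length $\tau$ is required, likely coupled with a variant of uniform persistence theory for non-autonomous functional differential equations to convert weak non-vanishing of $x$ into the desired strong bound $\liminf x>0$.
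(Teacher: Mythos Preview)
Your identity $c'/c=p(z(\cdot-\tau))\varphi(\cdot-\tau)-D$ and the substitution $y=x/c$ are correct and give a route different from the paper's (the paper works with $\psi(t)=\frac{x(t)}{x(t+\tau)}e^{-\int_t^{t+\tau}D}$ and compares $\psi$ to $\varphi$ through the technical Lemmas~\ref{vuelta}--\ref{lemaprincipal}), but both directions of your sketch contain gaps. In the sufficiency half, the claim that ``$y(t_n)$ is bounded below by a positive constant'' cannot hold: under \eqref{condicion} the function $c$ grows without bound while $x$ stays bounded (Lemma~\ref{funcionz}), so $y\to 0$ along \emph{every} trajectory. What the limit equation $y'=\varphi p(z)[y(\cdot-\tau)-y]$ actually provides is monotonicity of the running minimum, giving $y(t^*)\gtrsim\min_{[t^*-L-\tau,\,t^*-L]}y$ on a window where $s\approx z$; combined with $c(t^*)/c(t^*-L)\ge e^{\eta L}$ and $x\ge\delta$ before $t^*$, this yields $x(t^*)>\delta$, the desired contradiction. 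Turning ``$\gtrsim$'' into a rigorous estimate that is uniform in $t^*$ is work of the same order as the paper's Lemma~\ref{lemaprincipal}.

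The necessity half has a more serious gap. Negating \eqref{condicion} says only that for every $\eta,T$ there exist late, long windows with $c(t_2)/c(t_1)\le e^{\eta(t_2-t_1)}$; it does \emph{not} force $c$ to revisit small values or even to stay bounded. In the borderline constant-input case $p(\si)e^{-D\tau}=D$, for instance, $c$ is constant and your proposed trajectory $x\approx\varepsilon c\equiv\varepsilon c(0)$ never approaches zero, so the construction produces no witness against persistence. The paper avoids this by arguing the direct implication: persistence gives $x\ge\delta$, hence eventually $p(z)\ge p(s)+\varepsilon$, and the comparison Lemma~\ref{vuelta} then yields $\int p(z)\varphi\ge\int p(s)\psi+2\eta(t_2-t_1)$; since $\int\bigl(p(s(\cdot-\tau))\psi(\cdot-\tau)-D\bigr)=\ln x(t_2)-\ln x(t_1)$ is bounded, \eqref{condicion} follows. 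In your $y$-framework the analogous step is to show that the strict gap $p(z)>p(s)+\varepsilon$ forces exponential decay of $y$ in the delay equation, and this is not a continuous-dependence statement but precisely the content of that comparison lemma.
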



\section{Particular cases and previous results}\label{seccioncasos}

\subsection{Periodic nutrient input and  washout} Let $\omega$ be a positive constant, we now study the particular case when $\si$ and $D$ are $\omega$-periodic. For an $\omega$-periodic function $f$ we denote its average as 
$$\langle f\rangle:=\frac{1}{\omega}\int_0^\omega f(t)\, dt.$$

Then we have the following result.

\begin{thm}[Persistence for periodic data]\label{persistenciaperiodica} 
Let $\tau$ be any non-negative constant and assume the functions  $\si$ and  $D$ are $\omega$-periodic with the former positive and the latter non-null. 

Then there are a unique  $\omega$-periodic function $z$ solution of \eqref{derivadaz} and a unique $c$ (up to a constant factor) such that $\varphi$ defined  in   \eqref{funcionphi} is $\omega$-periodic. Therefore  System  \ref{sistemaprincipal} is persistent if and only if  
\begin{equation*} 
   \langle p(z) \varphi \rangle>\langle  D\rangle.
\end{equation*}
\end{thm}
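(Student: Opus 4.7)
The plan is to deduce Theorem~\ref{persistenciaperiodica} directly from Theorem~\ref{persistencia} after exhibiting canonical $\omega$-periodic choices of $z$ and $\varphi$ that turn the general criterion \eqref{condicion} into an inequality between averages.

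First I would construct the unique $\omega$-periodic washout solution. Since \eqref{derivadaz} is linear and scalar, the representation \eqref{formaz} reduces $\omega$-periodicity to the single equation $z(\omega)=z(0)$, namely
\begin{equation*}
z(0)\bigl(1-e^{-\int_0^\omega D(r)\,dr}\bigr)=\int_0^\omega \si(h)D(h)e^{-\int_h^\omega D(r)\,dr}\,dh.
\end{equation*}
Because $D$ is non-null, the bracket on the left is non-zero, so $z(0)$ is uniquely determined; the positivity of $\si$ and the assumption that $D\not\equiv 0$ force $z(0)>0$, yielding the unique positive $\omega$-periodic $z$.

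Next I would obtain $c$, and hence $\varphi$, by a Floquet/Krein-Rutman argument on the linear delay equation satisfied by $c$. Consider the period map $P:C([-\tau,0],\R)\to C([-\tau,0],\R)$ that sends an initial segment $c|_{[-\tau,0]}$ to the shifted segment $t\mapsto c(t+\omega)$, where $c$ is propagated by \eqref{funcionc}. By the non-negativity of the kernels in \eqref{funcionc}, $P$ is a bounded linear operator preserving the cone of non-negative continuous functions; a standard Arzelà--Ascoli argument shows that $P^k$ is compact for every $k$ with $k\omega>\tau$ (the ODE propagation produces $C^1$ smoothing after enough iterations). The Krein-Rutman theorem then provides a principal eigenvalue $\lambda>0$ with a strictly positive eigenfunction, giving a Floquet-type solution $c$ with $c(t+\omega)=\lambda c(t)$. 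Inserting this identity, together with the $\omega$-periodicity of $D$ and $z$, into \eqref{funcionphi} makes the factor $\lambda$ cancel, leaving $\varphi(t+\omega)=\varphi(t)$; uniqueness of $c$ up to a scalar follows from simplicity of the Krein-Rutman eigenfunction, and any other positive choice of $c$ on $[-\tau,0]$ fails to be $\omega$-periodic in $\varphi$ unless it is a scalar multiple of this eigenfunction.

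Finally I would apply Theorem~\ref{persistencia}. Since $p(z(\cdot-\tau))\varphi(\cdot-\tau)$ and $D$ are continuous and $\omega$-periodic, the elementary estimate
\begin{equation*}
\int_{t_1}^{t_2}p(z(t-\tau))\varphi(t-\tau)\,dt=(t_2-t_1)\langle p(z)\varphi\rangle+O(1),\qquad \int_{t_1}^{t_2}D(t)\,dt=(t_2-t_1)\langle D\rangle+O(1),
\end{equation*}
holds uniformly in $t_1,t_2$. If $\langle p(z)\varphi\rangle>\langle D\rangle$, pick $\eta\in(0,\langle p(z)\varphi\rangle-\langle D\rangle)$; then the linear-in-$(t_2-t_1)$ gap dominates the uniform $O(1)$ remainder once $T$ is large, so \eqref{condicion} holds on every interval of length at least $T$ and Theorem~\ref{persistencia} yields persistence. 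Conversely, if $\langle p(z)\varphi\rangle\le\langle D\rangle$, then for any $\eta>0$ the same asymptotics force \eqref{condicion} to fail along intervals $[t_1,t_1+k\omega]$ with $k\to\infty$, contradicting the necessary direction of Theorem~\ref{persistencia} and ruling out persistence.

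The main obstacle is the middle step: identifying the correct infinite-dimensional period map for \eqref{funcionc}, verifying positivity and eventual compactness, and invoking Krein-Rutman to extract a Floquet multiplier with a strictly positive eigenfunction. Once this Floquet decomposition is in hand, the rest is a routine reduction of the general criterion to an average inequality.
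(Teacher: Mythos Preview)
Your overall strategy is correct, and the first and third steps essentially match the paper's proof: the construction of the periodic $z$ via the fixed-point equation for $z(0)$ and the reduction of \eqref{condicion} to the average inequality via the $O(1)$ remainder are equivalent to what the paper does explicitly.

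The genuine divergence is in the middle step, where you produce the periodic $\varphi$. The paper does not invoke Krein--Rutman. Instead, it starts from an \emph{arbitrary} $c$ with $c(0)>0$, forms the associated $\psi$ from \eqref{funcionphi}, and uses the contraction-type estimate of Lemma~\ref{lemacrucial} to show that the shifted sequence $\psi_n(t)=\psi(t+n\omega)$ is Cauchy in $C([0,\omega])$; the limit is the periodic $\varphi$, and a $c$ realizing it is written down explicitly as $c(t)=\exp\int_0^t\bigl(-D(h)+p(z(h-\tau))\varphi(h-\tau)\bigr)\,dh$. Uniqueness also comes from Lemma~\ref{lemacrucial}: two periodic solutions of the fixed-point relation $\varphi(t)=\exp\bigl(-\int_{t-\tau}^t p(z)\varphi\bigr)$ are forced to coincide, and from $(\ln c)'=-D+p(z(\cdot-\tau))\varphi(\cdot-\tau)$ one reads off that $c_1/c_2$ is constant. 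This route is entirely elementary and reuses only the lemmas already developed for Theorem~\ref{persistencia}.

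Your Floquet/Krein--Rutman approach is a legitimate alternative and is the more conceptual one: it identifies the periodic $\varphi$ with the principal Floquet bundle of the linearized equation. The cost is that you must check the hypotheses carefully. The basic Krein--Rutman theorem only yields a non-negative eigenfunction; for strict positivity and simplicity you need the strong form, which in turn requires strong positivity of (a power of) the period map---true here because $p(z)>0$, but worth stating. You also need the observation that any $c$ for which $\varphi$ is $\omega$-periodic is automatically a Floquet solution (again from $(\ln c)'$ being periodic), so that uniqueness of $\varphi$ really reduces to simplicity of the principal eigenvalue. These are routine for scalar delay equations with strictly positive delayed coefficient, but your write-up glosses over them.
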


\subsection{Constant nutrient input and constant washout} This particular case serves to illustrate the ideas behind the proofs of Theorem \ref{persistencia} and Theorem \ref{persistenciaperiodica} and their relation to previous work. Assume that $\si$ and $D$ are constants and that $\tau>0$.   Ellermeyer in \cite[Theorem 3.3 \& Theorem 3.4]{ellermeyer1994competition}  proved that System \ref{sistemaprincipal} is persistent if and only if 
\begin{equation}\label{Dchico}
  p(\si)e^{-D\tau}>D.
\end{equation}
We get the same result in the following corollary of Theorem \ref{persistenciaperiodica}.
\begin{cor}
If $\si$ and $D$ are constants and  $\tau>0$ then System \ref{sistemaprincipal} is persistent if and only if inequality \eqref{Dchico} holds. 
\end{cor}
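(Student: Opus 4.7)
The plan is to specialize Theorem \ref{persistenciaperiodica} to this constant-coefficient setting and then simplify the persistence inequality $\langle p(z)\varphi\rangle>\langle D\rangle$ into the one-parameter condition \eqref{Dchico}. Since constants are $\omega$-periodic for every $\omega>0$, Theorem \ref{persistenciaperiodica} applies and gives a unique $\omega$-periodic washout solution $z$ of \eqref{derivadaz}. With $\si$ and $D$ constant, the unique constant (hence periodic) solution of $z'=(\si-z)D$ is clearly $z\equiv \si$.

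Next I would determine the corresponding $c$ explicitly. Since $p(z(t))=p(\si)$ is constant, the linear delay equation for $c$ reduces to
\begin{equation*}
  c'(t)=-Dc(t)+c(t-\tau)p(\si)e^{-D\tau}.
\end{equation*}
I would try the exponential ansatz $c(t)=e^{(\mu-D)t}$ and look for $\mu>0$ with $\mu=p(\si)e^{-\mu\tau}$. Because the right-hand side is a strictly decreasing function of $\mu$ that equals $p(\si)>0$ at $\mu=0$ and tends to $0$ as $\mu\to\infty$, there is exactly one such $\mu^{\star}>0$; this $c$ is positive on $[-\tau,0]$ with $c(0)=1>0$, so it satisfies the hypotheses of \eqref{funcionc}. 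Plugging into \eqref{funcionphi} gives
\begin{equation*}
  \varphi(t)=\frac{c(t)}{c(t+\tau)}e^{-D\tau}=e^{-(\mu^{\star}-D)\tau}e^{-D\tau}=e^{-\mu^{\star}\tau},
\end{equation*}
which is constant and therefore periodic, confirming by uniqueness from Theorem \ref{persistenciaperiodica} that this is the $c$ up to a scalar.

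The persistence criterion $\langle p(z)\varphi\rangle>\langle D\rangle$ then reduces, since all averaged quantities are constants, to $p(\si)e^{-\mu^{\star}\tau}>D$, which by the defining equation of $\mu^{\star}$ is simply $\mu^{\star}>D$. The last step is the equivalence $\mu^{\star}>D\iff p(\si)e^{-D\tau}>D$. For the forward direction, $\mu^{\star}>D$ and the monotonicity of $\mu\mapsto p(\si)e^{-\mu\tau}$ yield $D<\mu^{\star}=p(\si)e^{-\mu^{\star}\tau}<p(\si)e^{-D\tau}$. For the converse, if $p(\si)e^{-D\tau}>D$, let $F(\mu):=p(\si)e^{-\mu\tau}-\mu$; then $F$ is strictly decreasing with unique zero at $\mu^{\star}$, and $F(D)>0$ forces $\mu^{\star}>D$.

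The only nontrivial moment in the argument is guessing the right exponential ansatz for $c$ so that $\varphi$ becomes constant; once this is in hand, everything else is elementary monotonicity. No genuine obstacle is expected, so the corollary follows directly from Theorem \ref{persistenciaperiodica}.
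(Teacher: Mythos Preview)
Your proof is correct and follows essentially the same approach as the paper: both specialize Theorem~\ref{persistenciaperiodica} with $z\equiv\si$, take an exponential $c$, and reduce the criterion to a scalar equivalence via the monotonicity of $\mu\mapsto p(\si)e^{-\mu\tau}$. The only cosmetic difference is that you parameterize by the exponent $\mu^\star$ satisfying $\mu^\star=p(\si)e^{-\mu^\star\tau}$, whereas the paper works directly with the constant $\varphi=e^{-\mu^\star\tau}$ satisfying $\varphi=e^{-p(\si)\tau\varphi}$; these are the same object since $\mu^\star=p(\si)\varphi$.
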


\begin{proof}
Define $z=\si$ and $\varphi\in(0,1)$  as the unique number that satisfies
$$\varphi =e^{-p(\si)\tau\varphi}.$$ 
We claim that 
\begin{equation}\label{Dchico2}
  p(\si)\varphi>D
\end{equation} 
is equivalent to \eqref{Dchico}. To see this assume \eqref{Dchico2}, then 
$$D<p(\si)\varphi=p(\si)e^{-p(\si)\varphi\tau} <p(\si)e^{-D\tau},$$
and \eqref{Dchico} holds. Reversing inequality,  \eqref{Dchico} implies \eqref{Dchico2}. 

Now define
$$c(t)=e^{t\left(-D+ p\left(\si\right)\varphi\right)} $$
and note that satisfies \eqref{funcionc}. Since
$$\frac{c(t)}{c(t+\tau)}e^{-\tau D}=e^{-\tau p\left(\si\right)\varphi}=\varphi$$
we get that  $\varphi$ is the unique function given by  Theorem \ref{persistenciaperiodica}. Finally, this establishes that System \ref{sistemaprincipal} is persistent if and only if \eqref{Dchico2} holds and the result is proved.
\end{proof}

Since the above corollary does not require the use of  function $\varphi$, it seems natural to ask whether this equivalence can be extended to a more general case: is it possible to replace $\varphi$ in inequality  \eqref{condicion} in Theorem \ref{persistencia}   by $\exp(-\int_{t-\tau}^tD(r)\,dr)$? The answer is negative, as we show in the following subsection.

\subsection{Constant nutrient input and variable washout}

We now prove that in the particular case of constant nutrient input,  the persistence of System \ref{sistemaprincipal} implies a generalization of inequality \eqref{Dchico}. Furthermore, with a counterexample, we show  that this  generalization does not imply the persistence of  System \ref{sistemaprincipal}.

\begin{prop}\label{implicacion}
Suppose the function $\si$ is positive constant and $D$ is upper bounded. Therefore, if  System \ref{sistemaprincipal} is persistent then there are positive constants $\eta$ and $T$ such that
\begin{equation}\label{condicionperiodicidad}
  \int_{t_1}^{t_2} p(z)e^{-\int_{t-\tau}^tD(r)\,dr}\,dt>\int_{t_1}^{t_2}\left(D(t)+\eta\right) \,dt
\end{equation} 
for all $t_1>T$ and $t_2-t_1>T$.
\end{prop}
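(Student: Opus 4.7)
The approach is to apply Theorem \ref{persistencia} with a cleverly chosen $c$, which makes $\varphi$ constant, and then to bridge from the resulting inequality to the desired one using Jensen's inequality.

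Since $\si \equiv s_0$ is constant, I take the washout solution $z \equiv s_0$, so that $p(z(t-\tau)) = p(s_0)$ throughout. If $\int_0^\infty D(r)\,dr < \infty$, then $\int_{t-\tau}^t D \to 0$ and $e^{-\int_{t-\tau}^t D} \geq 1/2$ for $t$ large, so \eqref{condicionperiodicidad} follows directly with any $\eta < p(s_0)/2$. I therefore assume $\int D = \infty$, so all hypotheses of Theorem \ref{persistencia} are met. To exploit it, define $\lambda^*>0$ as the unique positive root of $\lambda^* = p(s_0) e^{-\lambda^* \tau}$ and choose
$$c(t) := \exp\left(\lambda^* t - \int_0^t D(r)\,dr\right).$$
A brief direct computation confirms that this $c$ solves \eqref{funcionc}; equivalently, the transformed function $w(t) := c(t) e^{\int_0^t D(r)\,dr} = e^{\lambda^* t}$ satisfies the constant-coefficient DDE $w'(t) = p(s_0) w(t-\tau)$. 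With this choice the ratio telescopes, yielding $\varphi(t-\tau) = e^{-\lambda^* \tau}$ for all $t \geq \tau$, a constant. The conclusion of Theorem \ref{persistencia} then reduces to the scalar inequality $\lambda^*(t_2-t_1) > \int_{t_1}^{t_2}(D(t)+\eta^*)\,dt$; that is, the running average $\bar D := (t_2-t_1)^{-1}\int_{t_1}^{t_2} D(r)\,dr$ satisfies $\bar D < \lambda^* - \eta^*$ on all sufficiently long intervals.

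The remaining step is to bound $\int_{t_1}^{t_2} e^{-\int_{t-\tau}^t D(r)\,dr}\,dt$ from below via Jensen's inequality. By Fubini together with the boundedness of $D$,
$$\int_{t_1}^{t_2}\int_{t-\tau}^t D(r)\,dr\,dt \;=\; \tau\int_{t_1}^{t_2} D(r)\,dr + O(1),$$
so convexity of $\exp$ yields
$$\int_{t_1}^{t_2} e^{-\int_{t-\tau}^t D(r)\,dr}\,dt \;\geq\; (t_2-t_1)\,e^{-\tau \bar D}\,(1-o(1))$$
as $t_2-t_1\to\infty$. Combining with $\bar D \leq \lambda^* - \eta^*$ and the identity $p(s_0) e^{-\lambda^*\tau} = \lambda^*$ gives $p(s_0) e^{-\tau\bar D} \geq \lambda^* e^{\tau\eta^*}\geq \lambda^*(1+\tau\eta^*)$, so that
$$\int_{t_1}^{t_2} p(s_0) e^{-\int_{t-\tau}^t D(r)\,dr}\,dt - \int_{t_1}^{t_2} D(t)\,dt \;\geq\; (t_2-t_1)\bigl[\lambda^*(1+\tau\eta^*)(1-o(1)) - (\lambda^*-\eta^*)\bigr],$$
which exceeds $\eta(t_2-t_1)$ for $\eta := \eta^*(1+\lambda^*\tau)/2$ and $t_2-t_1$ sufficiently large. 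The main obstacle is spotting the ansatz $c(t)=\exp(\lambda^* t-\int_0^t D)$ that turns $\varphi$ into a constant; once that is in hand, the proposition reduces to the conclusion of Theorem \ref{persistencia} combined with a routine Jensen estimate.
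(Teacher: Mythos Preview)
Your proof is correct and follows essentially the same route as the paper: both identify the constant $\varphi=e^{-\lambda^*\tau}$ (your explicit $c$ makes this transparent), invoke Theorem~\ref{persistencia} to obtain $p(s_0)\varphi\,(t_2-t_1)>\int_{t_1}^{t_2}(D+\eta^*)$, and then use convexity of $e^{-x}$ together with Fubini to pass from $\varphi$ to $e^{-\int_{t-\tau}^t D}$. The only cosmetic difference is that the paper applies the tangent-line inequality $e^{-a}-e^{-b}\ge e^{-b}(b-a)$ pointwise and then integrates, whereas you apply Jensen to the interval average; these are two packagings of the same convexity estimate.
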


The proof of this result is given in Section \ref{otraspruebas}. In the following example, we show that the condition  \eqref{condicionperiodicidad} is   not sufficient to ensure persistence.

\begin{ex}
Consider the case $p(z)=\pi$, $\tau=\pi/2$, $D(t)=1-\sin(t)$, $ \eta\in(0,1/100)$ and  $  T=300\pi$. Then 
\begin{equation}\label{condicion1ejemplo}
  \int_{t_1}^{t_2} p(z )e^{-\int_{t-\tau}^tD(r)\, dr}\,dt>\int_{t_1}^{t_2}\left(D(t)+  \eta\right)\, dt
\end{equation} 
for all $t_1> T$ and $t_2-t_1>  T$, and yet System \ref{sistemaprincipal} is not persistent. 
\end{ex}
To show this note that for all $h\in\R$ we get 
\begin{align*}
  \int_h^{2\pi+h}  p(z)e^{-\int_{t-\tau}^tD(r)\, dr } \, dt&=\int_h^{2\pi+h} \pi e^{-\pi/2+\sin(t)-\cos(t)}\, dt\\
  &=\int_0^{2\pi} \pi e^{-\pi/2+\sin(t)-\cos(t)}\, dt\\
  &>6.42.
\end{align*}
Let be $n\in\N$ such that $2\pi n\leq t_2-t_1<2\pi(n+1)$. Then $n\geq 150$,  we obtain
\begin{align*}
   \int_{t_1}^{t_2}\left(D(t)+  \eta\right)\, dt&=(1+ \eta)(t_2-t_1)+\cos(t_2)-\cos(t_1)\\
   &<  (t_2-t_1)1.01+2\\
   &< 2\pi(n+1)1.01+2\\
   &<6.42n\\
   &\leq   \int_{t_1}^{t_2} p(z )e^{-\int_{t-\tau}^tD(r)\, dr}\,dt, 
\end{align*}
and  condition \eqref{condicion1ejemplo} is satisfied. 

We now focus on persistence. Consider the constant $\varphi$ such that 
$$\varphi=e^{-\int_{t-\pi/2}^t\pi\varphi dr}=e^{-\varphi\pi^2/2},$$
 then  $\varphi<0.3.$ Furthermore, both $p(z)\varphi$ and $D$ are $2\pi$-periodic functions that satisfy 
$$\langle \pi\varphi\rangle =\varphi 2\pi^2<2\pi=\langle D\rangle.$$
Therefore, by Theorem \ref{persistenciaperiodica}  System \ref{sistemaprincipal} is not persistent. This ends the example.

\begin{rem}
A remaining question in this article is whether Proposition \ref{implicacion} is valid for a general nutrient input function.
\end{rem}

\subsection{Instantaneous response in growth}

Now assume  $\tau=0$ in System \ref{sistemaprincipal}. Therefore we get the following result.

\begin{cor}\label{corolariosindelay}
Let  $\tau= 0$ and assume the functions   $\si$ and $D$ are  upper bounded.  Then  System   \ref{sistemaprincipal} is persistent if and only if there are positive constants $\eta$ and $T$ such that 
$$\int_{t_1}^{t_2} p(z(t ))  \, dt> \int_{t_1}^{t_2}\left(D(t)+\eta\right) \, dt$$
for all $t_1> T$, $t_2-t_1> T$.
\end{cor}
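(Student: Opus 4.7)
The plan is to derive this corollary as a direct specialization of Theorem \ref{persistencia}, since setting $\tau = 0$ should collapse the weight function $\varphi$ and the shifted argument in \eqref{condicion} to give exactly the stated condition. So the strategy is just to compute $\varphi$ carefully when $\tau=0$ and then invoke the main theorem; there should be no real analytic difficulty, only a bookkeeping check.

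First I would look at the definition \eqref{funcionc} of $c$ with $\tau=0$. The integral $\int_{-\tau}^{t-\tau}$ becomes $\int_0^t$, and the equation becomes
\begin{equation*}
  c(t) = c(0)e^{-\int_0^t D(r)\,dr} + \int_0^t c(h)p(z(h))e^{-\int_h^t D(r)\,dr}\,dh, \quad t\geq 0,
\end{equation*}
which is just a positive solution of $c'(t) = (-D(t) + p(z(t)))c(t)$. Then I would plug into \eqref{funcionphi}: with $\tau = 0$ the ratio $c(t)/c(t+\tau)$ equals $1$ and the exponential $\exp(-\int_t^{t+\tau}D)$ equals $1$, so $\varphi(t) \equiv 1$ on $[0,\infty)$. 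Likewise, $p(z(t-\tau))\varphi(t-\tau) = p(z(t))$ in \eqref{condicion}.

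Next I would simply invoke Theorem \ref{persistencia} to conclude that persistence is equivalent to the existence of $\eta, T > 0$ such that
\begin{equation*}
  \int_{t_1}^{t_2} p(z(t))\,dt > \int_{t_1}^{t_2}\bigl(D(t)+\eta\bigr)\,dt
\end{equation*}
for all $t_1 > T$, $t_2 - t_1 > T$, which is precisely the statement of the corollary.

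The only mild obstacle is the hypothesis mismatch: Theorem \ref{persistencia} requires $\si$ to be bounded below by a positive constant and $\int D$ to diverge, whereas the corollary only states an upper bound on $\si$ and $D$. To handle this I would add a short remark: if $\int_0^\infty D(r)\,dr < \infty$, the argument following \eqref{derivadaz} in Section \ref{seccionprincipal} already shows that the system is always persistent, and in that case the right-hand side $\int_{t_1}^{t_2}(D+\eta)\,dt$ grows only as $\eta(t_2-t_1)$, while $p(z)$ stays bounded away from zero on $[T,\infty)$ as soon as $z$ does, so by shrinking $\eta$ the inequality holds trivially. For the lower bound on $\si$, the case $\tau=0$ makes it unnecessary, since $\varphi\equiv 1$ and the criterion depends only on $p(z)$, which inherits positivity from positivity of $z$ given by the Variation-of-Constants formula \eqref{formaz} and the non-negativity of $\si$ (if $\si$ degenerates to zero on large sets then $p(z)$ does as well and the inequality fails, matching non-persistence). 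With these minor observations in place, the corollary is an immediate consequence of Theorem \ref{persistencia}.
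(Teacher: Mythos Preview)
Your approach is essentially the paper's: observe that $\varphi\equiv 1$ when $\tau=0$ and invoke Theorem~\ref{persistencia}. The one noteworthy difference is in how the hypothesis mismatch is handled. The paper disposes of the lower bound on $\si$ more cleanly, by pointing out that in the proof of Theorem~\ref{persistencia} this bound enters only through Lemma~\ref{lemaprincipal}, whose proof begins with the observation that for $\tau=0$ one has $\varphi=\psi\equiv 1$ and there is nothing to prove; your heuristic about ``the criterion depends only on $p(z)$'' gestures at this but does not actually locate where the assumption is used. Conversely, you address the divergence of $\int D$ explicitly (correctly: if $\int_0^\infty D<\infty$ then $z$ is bounded below by $z(0)e^{-\int_0^\infty D}>0$, so $p(z)\ge c>0$ and the inequality holds for small $\eta$ and large $T$), whereas the paper treats that as a standing assumption from the discussion preceding Theorem~\ref{persistencia} and does not revisit it.
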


The proof follows  directly from Theorem \ref{persistencia} since $\varphi=1$. Furthermore, the assumption  that $\si$ is lower bounded by a positive constant is not required.  This is because to prove Theorem \ref{persistencia} this assumption is only necessary in the case that $\tau>0$ (see proof of Lemma \ref{lemaprincipal}).

Ellermeyer \emph{et al.} studied System \ref{sistemaprincipal} with instant  response in growth  and with constant washout  for nutrient and (different) constant washout for organisms in \cite{ellermeyer2001persistence}.  Therefore, Corollary \ref{corolariosindelay} agrees with  Theorem 3 therein   if we assume $D$ constant in the former and   the same washout for $s$ and $x$ in the  latter.

\section{Proof of the main result}\label{pruebas}

To begin with the study of System \ref{sistemaprincipal} we first analyze the existence and uniqueness of solutions. Furthermore, as mentioned, for physics reasons $s$ and $x$ must be non-negative functions. Since $p(s)$ is locally Lipschitz continuous we get the local existence and uniqueness of solutions. Now fix $\varepsilon>0$ and $T\in(0,\varepsilon)$ such that a solution  exists on $[-\tau,\varepsilon)$. Then $s$ and $x$ are bounded on $[-\tau,T]$ and so on $p'(s)$. By the mean value theorem applied to $p$, we get 
$$s'(t)\geq -D(t)s(t)-p(s)x(t)\geq-Ms(t)$$
for some constant $M=M(T)$,  which implies $s\geq 0$ on $[0,T]$.  Similarly $x$ is non-negative on $[0,T]$. Taking limit on $T$ ensures the functions $s$ and $x$ are non-negative over $[-\tau,\varepsilon)$. Therefore
\begin{align*}
  s'(t)&\leq \max_{h\in[0,\varepsilon]}D(h)\si(h),\\
  x'(t)&\leq \max_{h\in[-\tau,\varepsilon-\tau]} x(h )p(s(h ))e^{-\int_{h }^{h+\tau}D(r)\, dr},
\end{align*}  
which provides the solution can be extended over $[-\tau,\infty)$. Finally, note that if $x(t)>0$ for some $t\geq 0$ then $x(h)>0$ for all $h\geq t$. Therefore we have the following observation. 

\begin{rem}\label{observaciondefinicion}
System \ref{sistemaprincipal}   is persistent if and only if for every solution $(s,x)$ with non-null initial conditions  there exists   $\delta>0$ such that  $x(t)\geq \delta$ for all $t\geq\tau$.
\end{rem}

\subsection{Main ideas of the proof}\label{seccionideas}

We sketch the plan for proving  Theorem \ref{persistencia}. Assume   $x(t)$ is positive for   $t\geq \tau$, therefore 
\begin{equation}\label{comportamientox}
\begin{aligned}
  \frac{d}{dt}\ln(x(t))&=-D(t)+\frac{x(t-\tau)}{x(t)}p(s(t-\tau))e^{-\int_{t-\tau}^tD(r)\, dr} \\
  &=-D(t)+p(s(t-\tau))\psi(t-\tau)
\end{aligned}
\end{equation}
if we define
\begin{equation}\label{funcionpsi}
  \psi(t):=\frac{x(t)}{x(t+\tau)}e^{-\int^{t+\tau}_tD(r)\, dr}
\end{equation}
for all $t\geq 0$. This equality is the reason to define $\varphi$ as in \eqref{funcionphi}. Note that 
\begin{equation}\label{formax}
  x(t+\tau )=x(t_0)e^{\int_{t_0}^{t+\tau}p(s(h-\tau))\psi(h-\tau)-D(h)\, dh} 
\end{equation}
for $t+\tau\geq t_0\geq \tau $ and taking $t_0=t$ we obtain 
\begin{equation}\label{igualdadpsi}
   e^{-\int^{t+\tau}_tp(s(h-\tau))\psi(h-\tau)\, dh}= e^{-\int^{t }_{t-\tau}p(s(h))\psi(h)\, dh}=\psi(t) 
\end{equation}
for all $t\geq  \tau$. 

Now assume  System \ref{sistemaprincipal} is persistent. In the proof of the main theorem, we show that this implies that for large times it happens that  $z\gg s $. Furthermore, this implies $p(z)\gg p(s)$ and there is $\eta$ positive such that
$$\int p(z(t ))\varphi(t)\, dt\geq \int \left( p(s(t ))\psi(t)+2\eta\right)\, dt $$
(Lemma \ref{vuelta}). Then $x$ and $\ln(x)$ are both bounded (Lemma \ref{funcionz}) and using \eqref{comportamientox} we can find $t_2$ larger enough than $t_1$ such that
$$\int_{t_1}^{t_2}\left( -D(t)+p(s(t -\tau))\psi(t-\tau)+ \eta\right)\, dt=\ln(x(t_2))-\ln(x(t_1))+(t_2-t_1)\eta\geq 0 .$$
Finally, we get 
$$\int_{t_1}^{t_2}  p(z(t-\tau)) \varphi(t-\tau) \, dt> \int_{t_1}^{t_2} \left(D(t)+\eta \right) \, dt.$$
These are the main ideas behind the proof of the first half of Theorem \ref{persistencia}. 

On the other hand, assume \eqref{condicion}. To prove persistence we reason by contradiction. If  $x$ is  small enough, then  it happens that $z\approx s$.  This implies that   $\psi \approx \varphi $ (Lemma \ref{lemaprincipal}), $p(z)\approx p(s)$,  and
$$\int \left(p(s(t-\tau))\psi(t-\tau)-D(t)\right)\, dt\approx\int\left( p(z(t-\tau))\varphi(t-\tau)-D(t)\right)\, dt >0.$$
Returning to \eqref{formax} we see that $x$ cannot decrease to zero.

\subsection{Preliminary results}

We state and prove several  technical lemmas  for the proof of Theorem \ref{persistencia}. From now on and until the end of this section we assume the hypothesis of this theorem: $\si$ is upper and lower bounded by positive constants, and $D$ is upper bounded and its integral diverges. Also we introduce functions $f,\,g:[-\tau,\infty)\to\R^+$ both upper bounded by a positive constant $M$ and $f$ also lower bounded by a positive constant.  Furthermore, we ask $M$ satisfies 
\begin{equation}\label{cotaM}
  M\geq \frac{1}{4\tau+1}.
\end{equation} 

As it was done in \cite{ellermeyer1994competition}, we work with the concentration of nutrient stored internally given by
\begin{equation}\label{funciony}
  y(t):=\int_{t-\tau}^t x(h)p(s(h))e^{-\int_h^tD(r)\,dr}\,dh.
\end{equation}

\begin{lem}\label{funcionz} 
The functions  $z,\,s,\,x$ and  $y$ given in \eqref{derivadaz}, \ref{sistemaprincipal} and \eqref{funciony}, respectively, satisfy
$$|z(t)-s(t)-x(t)-y(t)|\to 0$$
when $t$ goes to infinity. Furthermore, they all are upper bounded.
\end{lem}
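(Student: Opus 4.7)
The plan is to introduce the auxiliary quantity $w(t) := z(t) - s(t) - x(t) - y(t)$ and show that it satisfies a first-order linear ODE whose solutions decay on account of the divergence of $\int_0^\infty D$.

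First I would differentiate $y$ by applying Leibniz's rule to \eqref{funciony}. Because the integrand $x(h)p(s(h))e^{-\int_h^t D(r)\,dr}$ depends on $t$ both through the upper limit and through the factor $e^{-\int_h^tD(r)\,dr}$, the computation produces two boundary contributions and one interior contribution, giving
\begin{equation*}
y'(t) = x(t)p(s(t)) - x(t-\tau)p(s(t-\tau))e^{-\int_{t-\tau}^t D(r)\,dr} - D(t)y(t).
\end{equation*}
Adding $x'(t)$ from System \ref{sistemaprincipal}, the delayed growth term of $x'$ cancels exactly against the lower-endpoint boundary term in $y'$, leaving $x' + y' = -D(x+y) + xp(s)$. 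Adding $s'$ from System \ref{sistemaprincipal} then eliminates the consumption term $-xp(s)$ and yields
\begin{equation*}
(s+x+y)'(t) = D(t)\bigl(\si(t) - s(t) - x(t) - y(t)\bigr).
\end{equation*}

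Next I would compare this with \eqref{derivadaz}: subtracting, the non-homogeneous terms cancel and $w$ satisfies $w'(t) = -D(t)w(t)$, so $w(t) = w(0)\exp\bigl(-\int_0^t D(r)\,dr\bigr)$. Since $\int_0^\infty D = \infty$ by hypothesis, $w(t) \to 0$, which is the first statement of the lemma.

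For the boundedness assertion, I would use the explicit formula \eqref{formaz} together with the upper bound on $\si$ to get $z(t) \leq z(0) + \sup_{h\geq 0}\si(h)$ uniformly, so $z$ is bounded. Since $|w|$ is bounded (indeed it decays to zero) and since $s, x, y$ are non-negative functions (as established in the paragraph preceding Remark \ref{observaciondefinicion}), the identity $s + x + y = z - w$ provides a uniform upper bound on the sum, and hence on each of $s$, $x$, $y$ individually. The only delicate point in the whole argument is the correct application of Leibniz's rule to the double-integral structure defining $y$; once that is in place the rest is algebraic cancellation, so I anticipate no real obstacle.
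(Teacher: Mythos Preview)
Your proposal is correct and follows essentially the same argument as the paper: compute $y'$ via Leibniz's rule, observe that $w=z-s-x-y$ satisfies $w'=-Dw$, and then use nonnegativity together with the boundedness of $z$ from \eqref{formaz}. The only cosmetic difference is that the paper obtains the slightly sharper bound $z(t)\le\max\{z(0),\sup\si\}$ rather than your $z(0)+\sup\si$, but this is immaterial.
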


\begin{proof} 
To begin note that
\begin{align*}
  \frac{d}{dt}y(t)&=x(t)p(s(t))-  x(t-\tau)p(s(t-\tau))e^{ -\int_{t-\tau}^t D(r)\,dr}-D(t)y(t).
\end{align*}
Then 
\begin{align*}
  \frac{d}{dt}(z-s-x-y) &=\si(t)D(t)-D(t)z(t)\\
  &\quad -(\si(t)-s(t))D(t)+x(t)p(s(t))\\
  &\quad -x(t-\tau)p(s(t-\tau))e^{-\int_t^\tau D(r)\, dr}+x(t)D(t)\\
  &\quad -x(t)p(s(t))+  x(t-\tau)p(s(t-\tau))e^{ -\int_{t-\tau}^t D(r)\,dr}+y(t)D(t)\\
  &=-D(t)(z(t)-s(t)-x(t)-y(t))
\end{align*}
and therefore
$$\left|z(t)-s(t)-x(t)-y(t)\right| =|z(0)-s(0)-x(0)-y(0)|e^{-\int_0^tD(r)\,dr}.$$ 
This proves the first statement of the lemma. Finally, by equality \eqref{formaz},  
\begin{align*}
  z(t)&= z(0) e^{-\int_0^tD(r)\, dr}+\int_0^t \si(h) D(h)e^{-\int_h^tD(r)\, dr}\, dh \\
  &\leq  \max_{h\geq 0}\left\{z(0),\si(h)\right\}  \left( e^{-\int_0^tD(r)\, dr}+\int_0^t  D(h)e^{-\int_h^tD(r)\, dr}\, dh\right),\\
  &= \max_{h\geq 0}\left\{z(0),\si(h)\right\} 
\end{align*}
which implies that $z$ is upper bounded. Since   $z,\,s,\,x$ e $y$ are non-negative  functions, by  the previous statement, they all are upper bounded. 
\end{proof}

\begin{lem}\label{propiedadphi}
 The function $\varphi$ defined in \eqref{funcionphi} satisfies
$$\varphi(t)=e^{-\int_{t-\tau}^t\varphi(h)p(z(h))\,dh}.$$
\end{lem}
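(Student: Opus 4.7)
The plan is to differentiate $\ln c$, recognize the resulting expression in terms of $\varphi$, and then integrate over the window $[t,t+\tau]$ so that the fundamental theorem of calculus reproduces exactly the logarithm of \eqref{funcionphi}.

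First I would take logarithms in \eqref{funcionphi} to get
$$\ln \varphi(t) = \ln c(t) - \ln c(t+\tau) - \int_t^{t+\tau} D(r)\, dr,$$
which reduces the statement of the lemma to an identity for the right-hand side. Since $c(t)>0$ for all $t\geq 0$, this rewriting is legitimate.

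Next, using the linear delay equation for $c$ recorded just after \eqref{funcionc}, I would compute
$$(\ln c)'(t) = \frac{c'(t)}{c(t)} = -D(t) + \frac{c(t-\tau)}{c(t)}\, p(z(t-\tau))\, e^{-\int_{t-\tau}^t D(r)\, dr}.$$
The second term is precisely $\varphi(t-\tau)\, p(z(t-\tau))$, by \eqref{funcionphi} evaluated at time $t-\tau$. Hence $(\ln c)'(t) = -D(t) + \varphi(t-\tau)\, p(z(t-\tau))$. This is the key observation: the ratio $(c(t-\tau)/c(t))\,e^{-\int_{t-\tau}^t D}$ appearing in the ODE is the very quantity used to define $\varphi$.

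Then I would integrate this identity on $[t,t+\tau]$ and perform the change of variables $h\mapsto h+\tau$ in the resulting integral of $\varphi(\cdot-\tau)\,p(z(\cdot-\tau))$ to obtain
$$\ln c(t+\tau) - \ln c(t) = -\int_t^{t+\tau} D(r)\, dr + \int_{t-\tau}^t \varphi(h)\, p(z(h))\, dh.$$
Substituting into the first display gives $\ln \varphi(t) = -\int_{t-\tau}^t \varphi(h)\, p(z(h))\, dh$, and exponentiating proves the claim. The identity is valid for $t\geq \tau$, where both sides are well defined. The argument is essentially routine bookkeeping; there is no real obstacle beyond correctly tracking the shift $h\mapsto h+\tau$ and recognizing that $\varphi$ has been engineered so that $(\ln c)'$ takes the clean form above.
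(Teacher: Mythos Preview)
Your proof is correct and follows essentially the same route as the paper: both compute $c'(t)/c(t)$, recognize the appearance of $\varphi(t-\tau)p(z(t-\tau))$, and integrate over $[t,t+\tau]$ to recover the definition of $\varphi(t)$. Your remark that the identity holds for $t\geq\tau$ is the natural domain restriction and matches the paper's implicit usage.
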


\begin{proof}
To see this note that $c(t)$ defined in \eqref{funcionc} is positive for all $t\geq 0$ and satisfies
\begin{align*}
  c'(t)&=c(t)\left(-D(t)+\frac{c(t-\tau)}{c(t)}p(z(t-\tau))e^{-\int_{t-\tau}^tD(r)\,dr}\right)\\
  &= c(t)\left(-D(t)+\varphi(t-\tau)p(z(t-\tau)) \right).
\end{align*}
Then 
\begin{align*}
  c(t+\tau)=c(t)e^{\int_t^{t+\tau}\left(-D(h)+\varphi(h-\tau)p(z(h-\tau)) \right)\,dh}
\end{align*}
and we finally get
$$e^{\int_t^{t+\tau} \varphi(h-\tau)p(z(h-\tau))  \,dh} =\frac{c(t)}{c(t+\tau)}e^{-\int_t^{t+\tau}  D(h) \,dh}=\varphi(t).$$
 \end{proof}

The next lemma is the key to proving the first half of Theorem \ref{persistencia}. 

\begin{lem}\label{vuelta}
Let $t_0\geq 0$ and $\tau>0$. Consider $\varphi,\,\psi:[t_0-\tau,\infty)\to(0,1]$ satisfying 
\begin{align*}
    \varphi(t)&=e^{-\int^t_{t-\tau}f(h)\varphi(h)\, dh}, \\
    \psi(t)&=e^{-\int^t_{t-\tau}g(h)\psi(h)\, dh}  
\end{align*}
for all $t> t_0$ and assume there is    $\varepsilon>0$   such that  
\begin{equation}\label{distancia}
  f(t)> g(t)+\varepsilon
\end{equation}
for all  $t\geq t_0$. Then there are positive constants $\eta$ and $T$ such that  
$$\int_{t_1 }^{t_2}f(t-\tau)\varphi(t-\tau)\, dt\geq  \int_{t_1 }^{t_2} \left(g(t-\tau)\psi(t-\tau)+2\eta \right)\, dt$$
for all  $t_2-T\geq t_1\geq t_0$.
\end{lem}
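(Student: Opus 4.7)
The plan is to change variables $u = t - \tau$ and rewrite the conclusion as
\[
\int_{t_1 - \tau}^{t_2 - \tau}\bigl(f(u)\varphi(u) - g(u)\psi(u)\bigr)\,du \geq 2\eta(t_2 - t_1),
\]
and then exploit the defining identity $-\ln\varphi(s) = \int_{s-\tau}^s f\varphi$, and its analogue for $\psi$, to convert the left-hand side into a telescoping sum over chunks of length $\tau$. Partitioning $[t_1 - \tau, t_2 - \tau]$ into $N := \lfloor (t_2 - t_1)/\tau \rfloor$ consecutive intervals $[t_1 + (k-1)\tau, t_1 + k\tau]$ plus a tail of length less than $\tau$ yields
\[
\int_{t_1 - \tau}^{t_2 - \tau}(f\varphi - g\psi)\,du \;=\; \sum_{k=0}^{N-1} V(t_1 + k\tau) + R,
\]
where $V(t) := \ln\bigl(\psi(t)/\varphi(t)\bigr)$ and $|R| \leq 2M\tau$ bounds the tail (using $f\varphi, g\psi \leq M$).

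At this point the proof reduces to establishing a uniform positive lower bound $V(t) \geq c > 0$ for all $t$ beyond some threshold $T_0 \geq t_0$. Indeed, taking $\eta := c/(4\tau)$ and $T$ large enough so that $cN$ dominates $2\eta(t_2 - t_1)$ together with the $O(M\tau)$ remainder and the contribution of the at most $O(T_0/\tau)$ early summands, the desired inequality follows.

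For the core lower bound on $V$, I would use the decomposition
\[
V(t) \;=\; \int_{t-\tau}^t (f-g)\varphi\,dh \;-\; \int_{t-\tau}^t g(\psi - \varphi)\,dh.
\]
Since $f$ is bounded below by some $m > 0$ and $f, g \leq M$, one checks that $\varphi, \psi \in [e^{-\tau M}, 1]$, so the first integral is at least $\varepsilon \tau e^{-\tau M} =: \delta > 0$. For the second, the identity $\psi - \varphi = \psi\bigl(1 - e^{-V}\bigr)$ together with $\psi \leq 1$ and $1 - e^{-V} \leq V^+ := \max(V,0)$ gives $g(\psi - \varphi) \leq M V^+$, hence
\[
V(t) \;\geq\; \delta \;-\; M \int_{t-\tau}^t V^+(h)\,dh.
\]
A bootstrap using this recursive inequality, combined with a Fubini-type estimate showing that whenever $V(t) < 0$ the preceding $\tau$-window must carry a definite amount of $V^+$ mass (so the Lebesgue measure of $\{V < 0\}$ is controlled by $\int V^+$), then yields the uniform lower bound $V(t) \geq c$ for $t \geq T_0$.

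The main obstacle I anticipate is precisely this last step: closing the recursive inequality without imposing auxiliary smallness on $M\tau$. The crude estimate using the absolute bound $|V| \leq \tau M$ only gives $V(t) \geq \delta - (M\tau)^2$, which is not useful in general; the argument must genuinely exploit the sharper bound $\psi - \varphi \leq \psi V^+$ (rather than the pointwise $|\psi - \varphi| \leq |V|$) and carefully track $V^+$ and $V^-$ separately along the delay structure.
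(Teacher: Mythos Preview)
Your reduction to the quantity $V(t)=\ln\bigl(\psi(t)/\varphi(t)\bigr)=\int_{t-\tau}^{t}(f\varphi-g\psi)$ and the telescoping over $\tau$-blocks is correct, but the proof then hinges entirely on the pointwise bound $V(t)\geq c>0$ for all large $t$, and this step is not carried out. You yourself flag it as the main obstacle: the recursive inequality $V(t)\geq\delta-M\int_{t-\tau}^{t}V^{+}$ gives nothing without a smallness assumption on $M\tau$, and the promised bootstrap ``tracking $V^{+}$ and $V^{-}$ separately along the delay structure'' is only announced, never executed. Worse, it is not clear the pointwise claim is even true under the stated hypotheses. With $g$ fixed and $f$ allowed to oscillate in time between $g+\varepsilon$ and $M$, one can force $\varphi$ through delay-induced overshoots that bring it above $\psi$, i.e.\ $V(t)<0$, at isolated times arbitrarily far out; nothing in the hypotheses prevents this from recurring. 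So the plan, as it stands, has a genuine gap at its core.

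The paper's argument avoids this difficulty by never asking for $V(t)\geq c$ pointwise. Instead it introduces a threshold $\alpha=1+\varepsilon/(2M)$ and splits time into two regimes. At any point $t$ where $\psi(t)\geq\alpha\varphi(t)$ one has $V(t)\geq\ln\alpha$, so the $\tau$-window ending at $t$ contributes $\int_{t-\tau}^{t}(f\varphi-g\psi)=V(t)\geq\ln\alpha$. On any interval where $\psi\leq\alpha\varphi$, one bypasses $V$ altogether and estimates the integrand directly:
\[
f\varphi-g\psi\;\geq\;(g+\varepsilon)\varphi-g\alpha\varphi\;=\;\Bigl(\varepsilon-\tfrac{g\varepsilon}{2M}\Bigr)\varphi\;\geq\;\tfrac{\varepsilon}{2}\,e^{-M\tau}.
\]
A backward greedy construction then covers $[t_{1}-\tau,t_{2}-\tau]$ by alternating pieces of these two types, and a final boundary estimate absorbs the $O(\tau)$ slack. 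The essential point is that in the regime where $\psi$ may sit below or only barely above $\varphi$ (so $V$ is small or negative), the paper exploits the gap $f>g+\varepsilon$ \emph{pointwise in the integrand}, not through $V$. Your approach throws away exactly this leverage by insisting on a lower bound for $V$ itself.
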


\begin{proof}
Firstly, define 
\begin{align*}
\alpha &=1+ \varepsilon/(2M)\\
  \eta &=\min\left\{ \ln\left(\alpha \right)/(3\tau), \varepsilon e^{-\tau M}/6 \right\},\\
  T&= M\tau /\eta+3\tau.
\end{align*}
and fix $t_1$ and $t_2$ satisfying $t_2-T\geq t_1\geq t_0$. We sketch the   proof in three steps. 
 
\emph{Step 1.} Observe that if  $\alpha \varphi(  t )\leq  \psi(  t ) $ for $t\geq t_0$, then we get   
\begin{align*}
  e^{\int_{t -\tau}^{t} \left(f(h)\varphi(h)-g(h)\psi(h)\right) \, dh}&=\frac{\psi(t)}{\varphi(t)}\geq  \alpha 
\end{align*}
which implies
$$\int_{t-\tau}^{t} \left(f(h)\varphi(h)-g(h)\psi(h) \right)\, dh\geq \int_{t-\tau}^{t}\frac{\ln\left(\alpha\right)}{\tau}\, dh\geq \int_{t-\tau}^{t}3\eta\, dh.$$

On the other hand, if there are $h_2\geq h_1\geq t_0$ such that  $ \alpha \varphi(  t )\geq \psi(  t ) $ for all $t\in [h_1,h_2]$ then  
\begin{equation*}
  \begin{aligned}
  \left(g(t)+\varepsilon/2\right)\varphi(t)-g(t)\psi(t)&\geq \left(g(t)+\varepsilon/2\right)\varphi(t)-g(t) \alpha \varphi(t )\\
  &\geq \left(g(t)+\frac{\varepsilon g(t)}{2M}\right)\varphi(t)-g(t) \alpha \varphi(t )\\
  &= 0.
  \end{aligned}
\end{equation*} 
By  \eqref{distancia} and since  $\varphi(t) \geq e^{-\tau M}$ for all $t\geq t_0$,   we get
\begin{align*}
  \int_{h_1}^{h_2} \left(f(t)\varphi(t)-g(t)\psi(t) \right)\, dt&\geq   \int_{h_1}^{h_2}\left( (g(t)+\varepsilon) \varphi(t)-g(t)\psi(t) \right)\, dt\\
  & =\int_{h_1}^{h_2} \left(\left(g(t)+\frac{\varepsilon}{2}\right) \varphi(t)-g(t)\psi(t)+ \frac{\varepsilon}{2} \varphi(t ) \right) \, dt\\
  &\geq \int_{h_1}^{h_2}  \frac{\varepsilon e^{-M\tau}}{2} \, dt\\
  &\geq \int_{h_1}^{h_2} 3 \eta \, dt.
\end{align*}

\emph{Step 2.} We now claim there is a finite decreasing sequence $(h_n)_{1\leq n\leq N}\subset\R$ with the properties that $\alpha\varphi( h_n )\leq  \psi(  h_n)$ for all $1\leq n\leq N-1$ and that $\alpha\varphi( t )\geq  \psi(  t)$ if  
$$t\in I=\bigcup_{n=1}^{N-1}[h_{n+1},h_n-\tau]\cup [h_1,t_2-\tau] .$$
Note that  the step above  implies
\begin{equation}\label{caminoenreverso}
  \begin{aligned}
  \int_{h_n-\tau }^{h_n}f(h)\varphi(h)\, dh&\geq  \int_{h_n-\tau }^{h_n}\left( g(h)\psi(h)+3\eta \right)\, dh,\\
  \int_If(h)\varphi(h)\, dh&\geq  \int_I\left( g(h)\psi(h)+3\eta\right) \, dh 
  \end{aligned}
\end{equation}
for all $1\leq n\leq N-1$. Therefore, define  
\begin{equation*}
  h_1 =\left\{\begin{aligned}
    &t_2-\tau &\text{if }  \alpha \varphi( t_2-\tau )< \psi(  t_2-\tau),\\
   &\inf\left\{\begin{array}{l}
   t\geq   t_1-\tau : \alpha \varphi(  h )\geq \psi(  h )\\
   \text{for all }h\in[t,t_2-\tau]
   \end{array}\right\} & \text{otherwise.}
  \end{aligned}\right.
\end{equation*}
For $n\geq 1$ and while $t_1\leq h_{n } $, define
\begin{equation*}
  h_{n+1} =\left\{\begin{aligned}
    &h_{n }-\tau& \text{if } \alpha \varphi( h_{n }-\tau)< \psi( h_{n }-\tau),\\
   &\inf\left\{\begin{array}{l}
   t\geq   t_1 -\tau: \alpha \varphi(  h )\geq \psi(  h )\\
   \text{for all }h\in[t,h_n-\tau]
   \end{array}\right\} &\text{otherwise.}
  \end{aligned}\right.
\end{equation*}
Observe that the sequence ends when $t_1> h_{N } \geq t_1-\tau$. Taking into consideration \eqref{caminoenreverso},  we get
$$\int_{h_N }^{t_2-\tau}f(h)\varphi(h)\, dh\geq  \int_{h_N }^{t_2-\tau}\left( g(h)\psi(h)+3\eta \right)\, dh.$$
 
\emph{Step 3.} Finally, notice that
\begin{align*}
  \int_{t_1-\tau}^{t_2-\tau}f(t)\varphi(t)\, dt&\geq \int_{h_N}^{t_2}f(t)\varphi(t)\, dt\\
  &\geq \int_{h_N }^{t_2-\tau} \left(g(t)\psi(t)+2\eta \right)\, dt+(t_2-\tau-h_N)\eta\\
  &\geq \int_{t_1 -\tau}^{t_2-\tau} \left(g(t)\psi(t)+2\eta \right)\, dt+(T-\tau)\eta\\
  &\quad -\int_{t_1-\tau }^{h_N} \left(g(t)\psi(t)+2\eta \right)\, dt \\
  &\geq \int_{t_1-\tau }^{t_2-\tau}\left( g(t)\psi(t)+2\eta \right)\, dt+(T-\tau)\eta-\tau (M+2\eta) \\
  &= \int_{t_1 -\tau}^{t_2-\tau} \left(g(t)\psi(t)+2\eta \right)\, dt+(T-M\tau/\eta-3\tau)\eta \\
  &= \int_{t_1 -\tau}^{t_2-\tau} \left(g(t)\psi(t)+2\eta \right)\, dt 
\end{align*}
by definition of $T$, and the lemma is proved.
\end{proof}

We now continue with the   lemmas that we need to prove the second half of Theorem \ref{persistencia}.

\begin{lem}\label{hermana}
Let $0\leq t_0<t_1$, $\tau>0$  and $\varphi:[t_0-\tau,\infty)\to(0,1]$ be such that 
$$\varphi(t)=e^{-\int_{t-\tau}^tf(h)\varphi(h)\, dh}$$
for all $t>t_0.$
Assume that there is $\varepsilon>0$ such that   $|f(t)-g(t)|<\varepsilon$ for all $t\in [t_0,t_1]$. Then  there exists  $\tilde\psi:[t_0-\tau,\infty)\to(0,1]$ that satisfies 
$$\tilde\psi(t)=e^{-\int_{t-\tau}^tg(h)\tilde\psi(h)\, dh} $$
for all $t>t_0$ and such that 
$$|\varphi(t)-\tilde\psi(t)|<2\varepsilon\tau e^{2M(t-t_0)} $$
for all $t\in [t_0-\tau,t_1]$.
\end{lem}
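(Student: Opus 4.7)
My plan is to extend $\varphi$ to a function $\tilde\psi$ satisfying the $g$--equation by matching initial data on $[t_0-\tau,t_0]$, solving the equation forward, and controlling the discrepancy by a delay-Gronwall estimate.

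First I would set $\tilde\psi(t):=\varphi(t)$ on $[t_0-\tau,t_0]$ and construct $\tilde\psi$ on $(t_0,\infty)$ via the integral equation $\tilde\psi(t)=\exp(-\int_{t-\tau}^{t}g\tilde\psi)$. On $(t_0,t_0+\tau]$ this implicit relation reduces to an ODE: with $C(t):=\exp(-\int_{t-\tau}^{t_0}g(h)\tilde\psi(h)\,dh)$ (known from the initial data) and $U(t):=\int_{t_0}^{t}g(h)\tilde\psi(h)\,dh$, one has $\tilde\psi(t)=C(t)e^{-U(t)}$ and $U'(t)=g(t)C(t)e^{-U(t)}$ with $U(t_0)=0$, which is uniquely solvable by standard theory. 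Iterating on successive intervals $[t_0+k\tau,t_0+(k+1)\tau]$ extends $\tilde\psi$ to $(t_0,\infty)$, and the exponential form keeps $\tilde\psi(t)\in(0,1]$ automatically.

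Next I would bound $E(t):=|\varphi(t)-\tilde\psi(t)|$. On $[t_0-\tau,t_0]$, $E\equiv 0$ by construction. For $t>t_0$, using that $x\mapsto e^{-x}$ is $1$-Lipschitz on $[0,\infty)$, the splitting $f\varphi-g\tilde\psi=(f-g)\varphi+g(\varphi-\tilde\psi)$, and the bounds $\varphi\le 1$, $g\le M$, $|f-g|<\varepsilon$,
\[
E(t)\le\int_{t-\tau}^{t}|f(h)-g(h)|\,\varphi(h)\,dh+M\int_{t-\tau}^{t}E(h)\,dh\le\varepsilon\tau+M\int_{t-\tau}^{t}E(h)\,dh.
\]

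Finally I would close the estimate by continuous induction on the ansatz $E(t)\le 2\varepsilon\tau e^{2M(t-t_0)}$: substituting the ansatz into the bound above gives $\int_{t-\tau}^{t}E\le(\varepsilon\tau/M)\,e^{2M(t-t_0)}(1-e^{-2M\tau})\le(\varepsilon\tau/M)\,e^{2M(t-t_0)}$, and therefore
\[
E(t)\le\varepsilon\tau+\varepsilon\tau e^{2M(t-t_0)}\le 2\varepsilon\tau e^{2M(t-t_0)},
\]
which closes the induction on $[t_0,t_1]$ (the bound on $[t_0-\tau,t_0]$ is automatic since $E\equiv 0$ there). The main obstacle is the delay-Gronwall step: because $\int_{t-\tau}^{t}E$ reaches back into a subinterval where the estimate must already hold, one has to set up the continuous induction carefully, and the factor $2$ in the exponent is picked precisely so that $M/(2M)=1/2$ dominates the time-lag contribution $1-e^{-2M\tau}$ uniformly.
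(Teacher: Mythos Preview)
Your argument is correct and follows essentially the same strategy as the paper: set $\tilde\psi=\varphi$ on $[t_0-\tau,t_0]$, use the $1$-Lipschitz bound for $x\mapsto e^{-x}$ to derive a delay integral inequality for $E=|\varphi-\tilde\psi|$, and close by continuous induction against the exponential barrier $2\varepsilon\tau e^{2M(t-t_0)}$. The only differences are cosmetic: the paper builds $\tilde\psi$ through an auxiliary linear delay ODE for a function $d$ (mirroring how $\psi$ arises from $x$ in the chemostat) rather than solving the fixed-point equation directly on successive $\tau$-intervals, and it handles a short initial segment $(t_0,t_0+\delta]$ with $\delta=\min\{\varepsilon\tau/M,\tau\}$ by a crude direct estimate before starting the induction, since $\varphi$ and $\tilde\psi$ need not be continuous at $t_0$---a point your write-up glosses over but which your unified bound still accommodates once you note $E\equiv 0$ on $[t_0-\tau,t_0]$.
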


\begin{proof}
Firstly need to define a function $\tilde\psi:[t_0-\tau ,\infty)\to (0,1]$ satisfying 
$$\left\{\begin{aligned}
\tilde\psi(t)&=\varphi(t)&\text{ if }t\in[t_0-\tau,t_0 ],\\
\tilde\psi(t)&=e^{-\int_{t-\tau}^tg (h)\tilde\psi(h)\,dh}&\text{ if }t>t_0 .
\end{aligned}
\right.
$$
For this purpose consider $d$ solution of
\begin{align*}
  d'(t)&=\left\{\begin{array}{ll}
  d(t)\left(-D(t) + \varphi(t-\tau )g(t-\tau)\right) &\text{ if }t\in[t_0,t_0+\tau],\\
  -D(t)d(t) +d(t-\tau)g(t-\tau)e^{-\int_{t-\tau}^tD(h)\, dh}&\text{ if }t\in(t_0+\tau,\infty). 
\end{array}\right.
\end{align*}
with $d(t_0)>0$. Reasoning as in  the begining of Section \ref{pruebas}, $d$ is well defined and positive. Therefore we define
$$\tilde\psi(t)= \left\{\begin{array}{ll}
 \varphi(t)&\text{ if }t\in[t_0-\tau,t_0 ],\\
 \frac{d(t)}{d(t+\tau)}e^{-\int_{t-\tau}^tD(r)\,dr}&\text{ if }t>t_0 .
\end{array}
\right.
$$
 As it was done in \eqref{comportamientox}, for all $t\geq t_0$, we get
$$d(t+\tau)=d(t) e^{\int_t^{t+\tau} \left(\tilde\psi(h-\tau)g(h-\tau)-D(h)\right)\,dh}$$
and then
$$\tilde\psi(t)= \frac{d(t)}{d(t+\tau)} e^{\int_t^{t+\tau}  -D(h)\,dh} = e^{-\int_{t-\tau}^{t } \tilde\psi(h )g(h ) \,dh} $$
as we need. 

To continue with the proof define 
$$\mathcal{S}=\left\{t\geq t_0-\tau:|\varphi(h)-\tilde\psi(h)|<2\varepsilon\tau e^{2M(h-t_0)}\text{ for all }h\in[t_0-\tau,t]\right\}.$$
Observe that $\mathcal{S}$ is not empty since   $ t_0 \in\mathcal{S}$. Also note that $\varphi$ and $\tilde\psi$ might not be continuous at $t_0$ but they are for $t>t_0$. For this reason we first claim that if  $\delta=\min\{\varepsilon\tau/M,\tau\}$ 
then  $t_0 +\delta\in\mathcal{S}$. To see this  note that by the mean value theorem, for positive  $a$ and $b$, we get $ |e^{-a}-e^{-b}|\leq |a-b| $. Then,  for all $t\in(t_0 ,t_0 +\delta]$, it happens
\begin{align*}
  |\varphi(t)-\tilde\psi(t)|&=\left|e^{-\int_{t -\tau}^{t }f(h)\varphi(h)\,dh}-e^{-\int_{t -\tau}^{t }g(h)\tilde\psi(h)\,dh}\right|\\
  &\leq \left|\int_{t -\tau}^{t_0 }f(h)\varphi(h)\,dh+\int_{t_0 }^{t}f(h)\varphi(h)\,dh\right.\\
  &\hspace{2cm}\left.-\int_{t -\tau}^{t_0 }g(h)\varphi(h)\,dh-\int_{t_0 }^{t}g(h)\tilde\psi(h)\,dh\right|\\
  &=\int_{t -\tau}^{t_0 }|f(h)-g(h)|  \varphi(h)\,dh+\int_{t_0 }^{t}|f(h)\varphi(h)-g(h)\tilde\psi(h)|\,dh\\
  &\leq \varepsilon(t_0 -(t-\tau))+M(t- t_0  )\\
  &\leq \varepsilon\tau+M\delta\\
  &<2\varepsilon\tau e^{2M(t-t_0)}.
\end{align*}

We now need to show that   $t^*=\sup\mathcal{S}\geq t_1$. By contradiction assume that  $t^*\in[t_0 +\delta,t_1)$. Since $t^*\geq t_0+\delta$, by continuity it must be 
\begin{equation}\label{contradiccionlemaexistencia}
  |\varphi(t^*)-\tilde\psi(t^*)|=2\varepsilon\tau e^{2M(t^*-t_0)} .
\end{equation}
But, on the other hand,
\begin{align*}
  |\varphi(t^*)-\tilde\psi(t^*)|&=\left|e^{-\int_{t^*-\tau}^{t^*}f(h)\varphi(h)\,dh}-e^{-\int_{t^*-\tau}^{t^*}g(h)\tilde\psi(h)\,dh}\right|\\
  &\leq \left| \int_{t^*-\tau}^{t^*}f(h)\varphi(h)\,dh - \int_{t^*-\tau}^{t^*}g(h)\tilde\psi(h)\,dh \right|\\
  &\leq  \int_{t^*-\tau}^{t^*}\left(|g(h)\varphi(h)-g(h)\tilde\psi(h)|+\varepsilon \varphi(h)\right)\, dh\\
  &\leq M\int_{t^*-\tau}^{t^*}| \varphi(h)- \tilde\psi(h)|\, dh+\varepsilon\tau\\
  &\leq M\int_{t^*-\tau}^{t^*} 2\varepsilon\tau e^{2M(h-t_0)}\, dh+\varepsilon\tau\\
  &=\varepsilon\tau\left(e^{2M(h-t_0)}\Big|_{t^*-\tau}^{t^*}+1\right)\\
  &=\varepsilon\tau\left(e^{2M(t^*-t_0)}(1-e^{-2M\tau})+1\right)\\
  &<2\varepsilon\tau e^{2M(t^*-t_0)} .
\end{align*}
This is a contradiction to \eqref{contradiccionlemaexistencia} and the lemma is proved. 
\end{proof}

\begin{lem}\label{lemacrucial} 
Let  $t_0\geq 0$, $\tau> 0$ and consider functions $\varphi,\,\psi:[t_0-\tau,\infty)\to(0,1]$  satisfying 
\begin{align*}
  \varphi(t)&=e^{-\int^t_{t-\tau}f(h)\varphi(h)\, dh},\\
  \psi(t)&=e^{-\int^t_{t-\tau}f(h)\psi(h)\, dh}
\end{align*}
for all $t\geq t_0$. Therefore  
$$|\varphi(t)-\psi(t)|< 3M \sqrt{\frac{ (t-t_0)}{ \inf f}} \left(1-e^{-M\tau}\right)^{(t-t_0)/(2\tau)-1/2} $$
for all $t\geq t_0 $  where $\inf f$ means the infimum of $f$. 
\end{lem}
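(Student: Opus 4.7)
The plan is to show that the difference $v(t) := \varphi(t)-\psi(t)$ decays with a geometric rate $(1-e^{-M\tau})$ per block of length $2\tau$, with a mild polynomial-in-$\sqrt{t-t_0}$ correction.

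Setting $A(t) := \int_{t-\tau}^{t} f(h)\varphi(h)\,dh$ and $B(t) := \int_{t-\tau}^{t} f(h)\psi(h)\,dh$, the fixed-point form of both equations yields the exact identity
$$|v(t)| = \bigl|e^{-A(t)}-e^{-B(t)}\bigr| = \max(\varphi(t),\psi(t))\bigl(1-e^{-|A(t)-B(t)|}\bigr),$$
which I would estimate in two complementary ways: linearly, $1-e^{-|A-B|}\le |A-B|\le M\int_{t-\tau}^{t}|v(h)|\,dh$, and by saturation, $1-e^{-|A-B|}\le 1-e^{-M\tau\sup_{[t-\tau,t]}|v|}$. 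Combined with the trivial initial bound $|v|\le 1$ on $[t_0-\tau,t_0]$ (since $\varphi,\psi\in(0,1]$) and the uniform lower bound $\varphi,\psi\ge e^{-M\tau}$, this immediately gives $|v(t)|\le 1-e^{-M\tau}$ for all $t\ge t_0$, together with the strict upper bound $\max(\varphi,\psi)\le e^{-(\inf f)\tau e^{-M\tau}}<1$, which is where the hypothesis $\inf f>0$ enters.

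Next I would iterate these two estimates over consecutive intervals of length $\tau$, tracking the blockwise supremum $W_n := \sup_{[t_0+(n-1)\tau,\,t_0+n\tau]}|v|$. The target is a two-step recursion $W_{n+2}\le(1-e^{-M\tau})W_n$, obtained by applying the saturation estimate on one $\tau$-interval (which caps $W_{n+1}$ by something of size at most $1-e^{-M\tau}$ irrespective of the size of $M\tau W_n$) and the linear estimate on the next (which transfers this factor into a true contraction via the prefactor $\max(\varphi,\psi)<1$). Unrolling across the $\lfloor(t-t_0)/(2\tau)\rfloor$ double-blocks inside $[t_0,t]$ then produces the advertised exponential factor $(1-e^{-M\tau})^{(t-t_0)/(2\tau)-1/2}$.

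The polynomial prefactor $3M\sqrt{(t-t_0)/(\inf f)}$ should come from the bookkeeping: summing the $O((t-t_0)/\tau)$ accumulated error contributions in a Cauchy--Schwarz fashion converts a linear-in-$n$ accumulation into a $\sqrt{n}$ factor, with $\inf f$ appearing through the lower bound on $A$ and $B$ used to certify $\max(\varphi,\psi)<1$. The main obstacle I foresee is obtaining a genuinely uniform contraction, independent of the size of $M\tau$: a naive linearization gives a factor $M\tau$ per step, useless whenever $M\tau\ge 1$, while a naive saturation only gives $|v|\le 1-e^{-M\tau}$ which is not contracting below itself. The delicate point is to alternate carefully between the saturating and linear regimes, exploiting the prefactor $\max(\varphi,\psi)<1$ strongly, so that every pair of $\tau$-steps extracts the clean multiplicative factor $(1-e^{-M\tau})$.
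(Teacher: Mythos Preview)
Your plan has a genuine gap at exactly the point you yourself flag: you never explain how alternating the saturation and linear estimates extracts the factor $(1-e^{-M\tau})$ per $2\tau$-block when $M\tau$ is large. Both of your estimates go through $|A-B|\le M\int_{t-\tau}^t|v|$, and from there the best you obtain is $|v(t)|\le \max(\varphi,\psi)\cdot M\tau\cdot\sup_{[t-\tau,t]}|v|$; the prefactor $\max(\varphi,\psi)\le e^{-(\inf f)\tau e^{-M\tau}}$ is far too close to $1$ to compensate for $M\tau$ when $M\tau\gg 1$, and the saturation bound $1-e^{-M\tau W}$ never dips below $(1-e^{-M\tau})W$ for small $W$. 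By working only with absolute values you have discarded the sign information in $v=\varphi-\psi$, and that information is exactly what drives the contraction in the paper.

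The paper's key observation, which you are missing, is that $v$ must \emph{vanish} on every open interval of length $\tau$ past $t_0$: if $v>0$ on $(t_*,t_*+\tau)$ then $\int_{t_*}^{t_*+\tau} f\varphi>\int_{t_*}^{t_*+\tau} f\psi$, forcing $\varphi(t_*+\tau)<\psi(t_*+\tau)$, a contradiction. Once $v(t_*)=0$ one has $\int_{t_*-\tau}^{t_*}fv=0$, and the signed identity $v(t)=e^{-c(t)}\bigl(\int_{t_*-\tau}^{t-\tau}fv-\int_{t_*}^t fv\bigr)$ yields, via a Gr\"onwall argument, the weighted-integral contraction $\int_{t_*}^t f|v|\le(1-e^{-M\tau})\int_{t_*-\tau}^{t-\tau}f|v|$. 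Chaining these zeros gives $\int_t^{t+\tau}f|v|\le M(t-t_0)(1-e^{-M\tau})^{(t-t_0)/\tau-1}$. The square root and the $\inf f$ then enter \emph{not} through Cauchy--Schwarz on accumulated errors, but through a pointwise-to-integral comparison: since $|v'|\le 4M$, a triangle of area $|v(t)|^2/(8M)$ sits under the graph of $|v|$ on $[t,t+\tau]$, so $|v(t)|^2\le 8M\int_t^{t+\tau}|v|\le \tfrac{8M}{\inf f}\int_t^{t+\tau}f|v|$, and taking the square root produces the stated bound.
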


\begin{proof}
Define
$$\begin{aligned}
m(t)&=\varphi(t)-\psi(t),\\
\beta&=1-e^{-M\tau} ,
\end{aligned}
$$ 
and note that $m\in[-1,1]$. We sketch the proof in four steps, the first three dedicated to show that  
$$\int_{t}^{t +\tau}f(h)|m(h)|\, dh\leq M  (t-t_0) \beta^{(t-t_0)/\tau-1} $$
for all $t\geq t_0$ and the last to prove the lemma itself.

\emph{Step 1.} 
For all $t_*\geq t_0$  we claim that there is $t\in(t_*,t_*+\tau)$ such that $m(t)=0$. By contradiction assume that $m>0$ on $(t_*,t_*+\tau)$. Since $f>0$ we get  
$$\varphi(t_*+\tau)=e^{-\int_{t_*}^{t_*+\tau}f(h)\varphi(h)\, dh}<e^{-\int_{t_*}^{t_*+\tau}f(h)\psi(h)\, dh}=\psi(t_*+\tau)$$
and  $m(t_*+\tau)<0$, which is a contradiction because $m$ is continuous. And  similarly for the case  $m<0$ on $(t_*,t_*+\tau)$.

\emph{Step 2.} 
Let be $t_*\geq t_0$ such that   $m(t_*)=0$. We claim that  
\begin{equation}\label{funcionm}
  \int_{t_*}^tf(h)|m(h)|\, dh\leq \beta \int_{t_*-\tau}^{t-\tau}f(h)|m(h)|\, dh
\end{equation}
for all $t\geq t_*$.   By the mean value theorem, note that  
\begin{equation*}
  \begin{aligned}
  m(t)&=e^{-\int^t_{t-\tau}f(h)\varphi(h)\, dh}-e^{-\int^t_{t-\tau}f(h)\psi(h)\, dh}\\
  &=-e^{-c}\left(\int_{t-\tau}^tf(h)\varphi(h)\, dh-\int_{t-\tau}^tf(h)\psi(h)\, dh\right)\\
  &=-e^{-c}\int_{t-\tau}^tf(h)m(h)\, dh
  \end{aligned}
\end{equation*}
where $c=c(t)>0$, but for simplicity we do not write the dependence on $t$. Therefore  
\begin{equation*}\label{igualacero}
  0= \int_{t_*-\tau}^{t_*}f(h)m(h)\, dh, 
\end{equation*}
which implies 
\begin{align*}
  \int_{t-\tau}^tf(h)m(h)\, dh&=\int_{t_*}^tf(h)m(h)\, dh+\int_{t_*-\tau}^{t_*}f(h)m(h)\, dh-\int_{t_*-\tau}^{t-\tau}f(h)m(h)\, dh\\
  &=\int_{t_*}^tf(h)m(h)\, dh  -\int_{t_*-\tau}^{t-\tau}f(h)m(h)\, dh
\end{align*}
for all $t\geq t_*$. Then 
\begin{align*}
  m(t)&= e^{-c} \left( \int_{t_*-\tau}^{t-\tau}f(h)m(h)\, dh-\int_{t_*}^tf(h)m(h)\, dh  \right).
\end{align*}
Suppose that  $m\geq 0$ on  $[t_*,t_1]$ for some $t_1\in(t_*,t_*+\tau]$, applying  Gr\"onwall's inequality, we get
\begin{align*}
  f(t)m(t)&\leq f(t)\left( \int_{t_*-\tau}^{t-\tau}f(h)m(h)\, dh-\int_{t_*}^tf(h)m(h)\, dh  \right)\\
  &\leq M\left( \int_{t_*-\tau}^{t-\tau}f(h)m(h)\, dh-\int_{t_*}^tf(h)m(h)\, dh  \right)\\
\end{align*}
for all  $t\in[t_*,t_1]$, and then
$$\frac{d}{dt}e^{M(t-t_*)}\int_{t_*}^tf(h)m(h)\, dh \leq e^{M(t-t_*)}M \int_{t_*-\tau}^{t-\tau}f(h)|m(h)|\, dh.$$
By Fubini–Tonelli theorem, we obtain
\begin{align*}
  \int_{t_*}^tf(h)|m(h)|\, dh  &=\int_{t_*}^tf(h) m(h) \, dh  \\
  & \leq e^{M(t-t_*)}\int_{t_*}^tf(h)m(h)\, dh \\
  &\leq \int_{t_*}^t e^{-M(t-r)}M \int_{t_*-\tau}^{r-\tau}f(h)|m(h)|\, dhdr\\
  &= \int_{t_*-\tau}^{t-\tau}f(h)|m(h)|  \int_{h+\tau}^t e^{-M(t-r)}M \,drdh\\
  &\leq \int_{t_*-\tau}^{t-\tau}f(h)|m(h)|\, dh\int_{t_*}^t e^{-M(t-r)}M \,dr\\
  &\leq \int_{t_*-\tau}^{t-\tau}f(h)|m(h)|\, dh \left(1-e^{-M(t-t_*)}\right)\\
  &\leq \beta  \int_{t_*-\tau}^{t-\tau}f(h)|m(h)|\, dh 
\end{align*}
since $0\leq t-t_*\leq \tau$. Then  
\begin{equation}\label{desigualdadtestrella}
  \int_{t_*}^tf(h)|m(h)|\, dh   \leq \beta  \int_{t_*-\tau}^{t-\tau}f(h)|m(h)|\, dh 
\end{equation} 
for all $t\in[t_*,t_1]$. Note that if $m\leq 0$ on  $[t_*,t_1]$, similarly, we get the same result.

Now fix any $t\geq t_*$. By Step 1 we can split the interval in a disjoint union
$$[t_*,t)=\bigsqcup _{n=0}^{N} [h_n,h_{n+1})$$
where $h_0=t_*$, $h_{N+1}=t$, $h_{n+1}\in (h_n,h_n+\tau)$, $m(h_n)=0$  and $m$ does not change its sign on each of all the intervals $[h_n,h_{n+1})$ for all $0\leq n\leq N$. Since \eqref{desigualdadtestrella} is also valid for all $h_n$ (with $n\leq N$) instead of $t_*$, we get
\begin{align*}
  \int_{t_*}^tf(h)|m(h)|\, dh&=\sum _{n=0}^{N}\int_{h_n}^{h_{n+1}}f(h)|m(h)|\, dh\\
  & \leq \beta \sum _{n=0}^{N}\int_{h_n-\tau}^{h_{n+1}-\tau}f(h)|m(h)|\, dh\\
   &=   \beta \int_{t_*-\tau}^{t-\tau}f(h)|m(h)|\, dh
\end{align*} 
and  \eqref{funcionm} is proved.

\emph{Step 3.} 
Now fix any $t\geq t_0$ and let   $N\in\N\cup\{0\}$ be such that 
$$N\tau\leq t-t_0<(N+1)\tau.$$
We claim that
$$\int_{t}^{t+\tau  }f(h)|m(h)|\, dh \leq \beta^N \int_{t-N\tau-\sum_{n=0}^N \delta_n}^{t-N\tau }f(h)|m(h)|\, dh $$
where $\delta_n\in[0,\tau)$ such that $m(t-n\tau-\delta_n)=0$ for all $0\leq n\leq N$. We proceed by induction. For the case $N=0$, by Step 1 there is $\delta_0\in[0,\tau)$  such that $m(t-\delta_0)=0$, and then by Step 2
\begin{align*}
  \int_{t}^{t+\tau  }f(h)|m(h)|\, dh&\leq \int_{t-\delta_0}^{t +\tau }f(h)|m(h)|\, dh\\
  &\leq \beta \int_{t-\delta_0 - \tau}^{t }f(h)|m(h)|\, dh  .  
\end{align*}
If we assume it is valid for $N-1$, again  by Step 1 there exists $\delta_N\in [0,\tau)$ such that 
$$m\left(t-(N-1)\tau-\sum_{n=0}^{N-1} \delta_n-\delta_N\right)=0,$$
and then
\begin{align*}
  \int_{t}^{t+\tau  }f(h)|m(h)|\, dh&\leq \beta^{N-1} \int_{t-(N-1)\tau-\sum_{n=0}^{N-1} \delta_n}^{t-(N-1)\tau }f(h)|m(h)|\, dh\\
  &\leq \beta^{N-1} \int_{t-(N-1)\tau-\sum_{n=0}^{N} \delta_n}^{t-(N-1)\tau }f(h)|m(h)|\, dh\\
  &\leq \beta^N \int_{t-N\tau-\sum_{n=0}^N \delta_n}^{t-N\tau }f(h)|m(h)|\, dh  .
\end{align*}

Finally, since $\sum_{n=0}^N \delta_n\leq N\tau\leq t-t_0$ and $ (t-t_0)/\tau-1<N$, we get
\begin{align*}
  \int_{t}^{t+\tau  }f(h)|m(h)|\, dh& \leq  \beta^N M  \sum_{n=0}^N \delta_n \\
  &\leq   M(t-t_0)\beta^{(t-t_0)/\tau-1}.
\end{align*}

\emph{Step 4.} Since
$$\varphi'(t)=\frac{d}{dt}e^{-\int_{t-\tau}^tf(h)\varphi(h)\, dh}=\varphi(t)[f(t-\tau)\varphi(t-\tau)-f(t)\varphi(t)],$$
and similarly, for  $\psi$, we get  
\begin{align*}
  m'(t)&=\varphi(t)[f(t-\tau)\varphi(t-\tau)-f(t)\varphi(t)]-\psi(t)[f(t-\tau)\psi(t-\tau)-f(t)\psi(t)]\\
  &=f(t-\tau)[\varphi(t)\varphi(t-\tau)-\psi(t)\psi(t-\tau)]-f(t)[\varphi(t)^2-\psi(t)^2]\\
  &\quad \pm f(t-\tau) \varphi(t)\psi(t-\tau) \\
  & =f(t-\tau)[\varphi(t)m(t-\tau)+m(t)\psi(t-\tau)]-f(t)[\varphi(t)^2-\psi(t)^2]\\
   & =f(t-\tau)[\varphi(t)m(t-\tau)+m(t)\psi(t-\tau)]-f(t)[\varphi(t)+\psi(t)]m(t), 
\end{align*}  
and then $|m'(t)|\leq 4M$ for all $t\geq t_0$. 

Furthermore, for all $t\geq t_0$ there is a triangle of height $|m(t)|$ and base 
$$|m(t)|/(4M)$$
 contained in  the graphic of $|m |:[t,t+\tau]\to\R$. Note that the base is smaller than $\tau$ by \eqref{cotaM}. Therefore,  we obtain 
\begin{equation*}
  \frac{|m(t)|^2}{8M}\leq \int_{t}^{t+\tau}|m(h)|\, dh \leq \frac{1}{\inf f}\int_{t}^{t+\tau}f(h)|m(h)|\, dh\leq  \frac{ M(t-t_0)\beta^{(t-t_0)/\tau-1}}{\inf f}
\end{equation*}   
and the result is proved.
\end{proof}

Finally, we can prove the last lemma of this section.  

\begin{lem}\label{lemaprincipal}
Let  $\tau\geq 0$ and  $\varphi,\,\psi:[-\tau,\infty)\to(0,1]$ be functions satisfying  
\begin{align*}
  \varphi(t)&=e^{-\int^t_{t-\tau}f(h)\varphi(h)\, dh},\\
  \psi(t)&=e^{-\int^t_{t-\tau}g(h)\psi(h)\, dh}
\end{align*}
for all $t\geq  0$. Given positive constants  $\eta$  and $T$, there exist another positive constants $\varepsilon$ and $\tilde T $  such that if 
$$|f(t)-g(t)|<\varepsilon$$
for all $t\in [t_0,t_0+\tilde T+T]$ (for any $t_0\geq 0$), then 
$$|\varphi(t)-\psi(t)|<\eta$$
for all  $t\in [t_0+\tilde T,t_0+\tilde T+T]$. 
\end{lem}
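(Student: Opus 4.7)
The case $\tau=0$ is trivial: both integral equations reduce to $\varphi \equiv \psi \equiv 1$, so the conclusion holds with any positive choice of $\varepsilon$ and $\tilde T$. In what follows assume $\tau>0$.

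My plan is to apply a triangle inequality through an auxiliary function $\tilde\psi:[t_0-\tau,\infty)\to(0,1]$ that solves the $g$-equation but is initialized to coincide with $\varphi$ on $[t_0-\tau,t_0]$. Lemma \ref{hermana} will bound $|\varphi - \tilde\psi|$ on a finite window in terms of $\varepsilon$ (with exponential growth in time), while Lemma \ref{lemacrucial}, applied to the two $g$-solutions $\tilde\psi$ and $\psi$, will furnish a bound on $|\tilde\psi - \psi|$ with the geometric decay factor $\beta = 1 - e^{-M\tau} \in (0,1)$.

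Concretely, I would first choose $\tilde T$ large enough, depending only on $\eta$, $M$, $\tau$, $T$, and $\inf f$, so that
\begin{equation*}
\sup_{s\in[\tilde T,\tilde T+T]} 3M\sqrt{\frac{2s}{\inf f}}\, \beta^{s/(2\tau)-1/2} < \frac{\eta}{2};
\end{equation*}
this is possible because $\sqrt{s}\,\beta^{s/(2\tau)}\to 0$ as $s\to\infty$. Next I would pick $\varepsilon>0$ with $\varepsilon < \inf f /2$ and $2\varepsilon\tau e^{2M(\tilde T+T)} < \eta/2$. Given any $t_0 \geq 0$ and assuming $|f-g|<\varepsilon$ on $[t_0,t_0+\tilde T+T]$, Lemma \ref{hermana} produces $\tilde\psi$ on $[t_0-\tau,\infty)$ with $|\varphi(t)-\tilde\psi(t)| < 2\varepsilon\tau e^{2M(t-t_0)} < \eta/2$ for all $t \in [t_0-\tau, t_0+\tilde T+T]$. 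Since $\varepsilon < \inf f /2$ forces $g \geq f-\varepsilon > \inf f /2$ on $[t_0,t_0+\tilde T+T]$, the hypotheses of Lemma \ref{lemacrucial} hold with $g$ in place of $f$; applied to the pair $(\tilde\psi,\psi)$ starting at $t_0$ it gives $|\tilde\psi(t)-\psi(t)| < \eta/2$ for all $t-t_0 \in [\tilde T,\tilde T+T]$. A triangle inequality then yields $|\varphi-\psi|<\eta$ on $[t_0+\tilde T,t_0+\tilde T+T]$.

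The chief obstacle is the ordering of the constants: $\tilde T$ must be fixed first, depending only on $\eta, M, \tau, T, \inf f$, to exploit the geometric decay in Lemma \ref{lemacrucial}; only afterwards can $\varepsilon$ be chosen small enough to counteract the exponential blow-up $e^{2M(\tilde T+T)}$ coming from Lemma \ref{hermana}. The subsidiary point that $g$ inherits a positive lower bound from $f$ once $\varepsilon < \inf f/2$ is essential, since Lemma \ref{lemacrucial} is only stated under a strict positive lower bound on the kernel used by both solutions.
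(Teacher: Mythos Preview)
Your proposal is correct and follows essentially the same route as the paper: first choose $\tilde T$ from the decay in Lemma~\ref{lemacrucial}, then choose $\varepsilon$ to tame the exponential growth from Lemma~\ref{hermana}, and finish with a triangle inequality through the auxiliary $\tilde\psi$. Your added remark that $\varepsilon<\inf f/2$ guarantees a positive lower bound for $g$ on the relevant window (so that Lemma~\ref{lemacrucial} applies to the pair $(\tilde\psi,\psi)$ with kernel $g$) is a point the paper glosses over, simply writing $\inf f$ in place of the needed $\inf g$; your version is the more careful one.
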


\begin{proof}
Firstly, note that if $\tau=0$ then $\varphi(t)=\psi(t)=1$ for all $t\geq 0$ and there is nothing to prove. Therefore, for the rest of the proof assume that $\tau>0$. 

Since
$$3M \sqrt{\frac{ x}{ \inf f}} \left(1-e^{-M\tau}\right)^{x/(2\tau)-1/2} $$
goes to zero when $x$ goes to infinity, fix a constant $\tilde T>0$ such that 
$$3M \sqrt{\frac{ x}{ \inf f}} \left(1-e^{-M\tau}\right)^{x/(2\tau)-1/2} <\eta/2$$
when $x\geq \tilde T$. Also fix $\varepsilon>0$ such that
$$\varepsilon<\frac{ \eta e^{-2M(T+\tilde T)}}{4\tau}.$$
Therefore fix any $t_0\geq 0$, and assume  
$$|f(t)-g(t)|<\varepsilon$$
for all  $t\in [t_0,t_0+\tilde T+T]$. Let $\tilde \psi:[t_0-\tau,\infty)\to(0,1]$ be   the function given by Lemma  \ref{hermana} satisfying 
$$\tilde \psi(t)=e^{-\int_{t-\tau}^tg(h)\tilde\psi(h)\, dh}$$
for all  $t>t_0 $,  and 
$$|\varphi(t)-\tilde \psi(t)|<2\varepsilon\tau e^{2M(t-t_0)} <\eta/2$$
for all  $t\in [t_0,t_0+\tilde T+T]$. Using Lemma  \ref{lemacrucial} for  $\psi$  and  $\tilde \psi$, we get 
\begin{align*}
  |\varphi(t)-\psi(t)|& \leq |\varphi(t)-\tilde \psi(t)|+|\tilde\psi(t)-\psi(t)|\\
  &<\eta/2+3M \sqrt{\frac{ (t-t_0)}{\inf f}} \left(1-e^{-M\tau}\right)^{(t-t_0)/(2\tau)-1/2} \\
  &<\eta
\end{align*}
when  $t\in [t_0+\tilde T,t_0+\tilde T+T]$. This proves the lemma.
\end{proof}

\begin{rem}
The fact that $\tilde T$ and $\varepsilon$ do not depend  on $t_0$ in the previous lemma  is crucial to prove Theorem \ref{persistencia}.
\end{rem}

\subsection{Proof of Theorem \ref{persistencia}}

  Set $f(t)= p(z(t ))$ and $g(t)=p(s(t ))$. Since by Lemma   \ref{funcionz} we know that $z$ and $s$ are upper bounded we set 
 $$M=\max_{t\geq 0}\{p(z(t)),p(s(t))\}.$$  
 
\begin{proof}[Proof of necessary conditions for persistence.]
To prove the first half of the theorem we follow the ideas developed in \cite[Lemma 3]{ellermeyer2001persistence}. Therefore assume the System \ref{sistemaprincipal} is persistent and fix a solution $(s,x)$ with non-null initial conditions. Recalling Remark \ref{observaciondefinicion}, there exists a positive constant  $\delta$ such that $x(t)\geq \delta$ for all  $t\geq \tau$.  By Lemma   \ref{funcionz} there is  $t_0>2\tau$ such that if   $t\geq t_0$, then 
\begin{equation*}
  |s(t-\tau)+x(t-\tau)+y(t-\tau)-z(t-\tau)|<\delta/2. 
\end{equation*}
Since $y(t-\tau)\geq 0$, we get
$$z(t-\tau)> s(t-\tau)+x(t-\tau)+y(t-\tau)-\delta/2 \geq s(t)+\delta/2$$
for all $t\geq t_0$. Since $p'>0$, there is a constant  $K>0$ which is a lower bound for  $p'(\xi)$ when $\xi\leq \max_{t\geq 0}\{z(t),s(t)\}$. Therefore, by the mean value theorem, it happens that  
$$p(z)-p(s)=p'(\xi)(z-s)\geq K(z-s),$$
and then 
$$p(z(t-\tau))>p(s(t-\tau))+ K\delta/2  $$
for all $t\geq t_0$. 

Now consider $\varphi$ defined in \eqref{funcionphi} and $\psi$ defined in \eqref{funcionpsi}. By  Lemma \ref{propiedadphi} and Lemma  \ref{vuelta},   there exist positive constants $\eta$ and $\tilde T$ such that 
\begin{equation*}
  \int_{t_1}^{t_2}p(z(t-\tau ))\varphi(t-\tau)\, dt\geq \int_{t_1}^{t_2} \left(p(s(t-\tau ))\psi(t-\tau)+2\eta \right)\, dt
\end{equation*}
for all  $t_2-\tilde T\geq t_1\geq t_0$. Moreover, since
$$\frac{d}{dt}\ln(x(t))= -D(t)+p(s(t-\tau))\psi(t-\tau), $$
then 
$$\ln(x(t_2 ))-\ln(x(t_1 ))=\int_{t_1 }^{t_2 } \left(p(s(t-\tau))\psi(t-\tau )-D(t)\right)\, dt.$$
Note that by Lemma \ref{funcionz}, and the hypothesis of persistence,  $x(t)$ is upper  and lower bounded by positive constants  for all $t\geq \tau$. Then,  fix $T>\max\{t_0,\tilde T\}$ large enough for  
$$\left|\frac{\ln(x(t_2 ))-\ln(x(t_1 ))}{T}\right|<\eta $$ 
for all $t_2\geq t_1\geq \tau.$

Therefore, we get the constants $\eta$ and $T$ as we need:
\begin{align*}
  \int_{t_1}^{t_2}\left(p(z(t-\tau))\varphi(t-\tau)-D(t)\right)\, dt&\geq  \int_{t_1}^{t_2}\left(p(s(t-\tau))\psi(t-\tau)+2\eta-D(t)\right)\, dt\\
  &=\ln(x(t_2))-\ln(x(t_1))+\int_{t_1}^{t_2} 2\eta \, dt\\
  &= \int_{t_1}^{t_2}  \left(\frac{\ln(x(t_2))-\ln(x(t_1))}{t_2-t_1}+2\eta\right)\, dt\\
  &>  \int_{t_1}^{t_2}  \eta \, dt
\end{align*}
if $t_1>T$ and $t_2-t_1>T$. This proves the first half of the theorem.

\end{proof}

\begin{proof}[Proof of sufficient conditions for persistence.]

Fix  $\eta$ and  $T $ as in the statement, we need to show the persistence. Therefore fix $(s,x)$  solution of System \ref{sistemaprincipal} with non-null initial conditions. Since $\si$ is lower bounded by a positive constant and $z(0)>0$ we have that $p(z)$ is lower bounded by a positive constant. Then consider the  constants $\eta/(4M)$ and $T$, by Lemma   \ref{lemaprincipal} and  for any $t_0\geq 0$,  there are positive  constants $\tilde T$ and  $\varepsilon$ (independent of $t_0$) such that if 
$$|p(z(t ))-p(s(t ))|<\varepsilon$$
for all  $t\in[t_0,t_0+T+\tilde T]$, then 
\begin{equation}\label{primeradesigualdad}
  |\varphi(t)-\psi(t)|<\frac{\eta}{4M }  
\end{equation} 
for all $t\in[t_0+\tilde T,t_0+T+\tilde T].$ Also we can fix $\varepsilon$ small enough so that  
\begin{equation}\label{segundadesigualdad}
  \varepsilon\leq \eta/4.
\end{equation}
Furthermore, by Lemma \ref{funcionz} there exists $t_0>\tau$ such that if $t>t_0$, then 
\begin{equation*}
  |s(t)+x(t)+y(t)-z(t)|< \frac{\varepsilon}{2L} 
\end{equation*}
where  $L$ is a positive upper bound for   $p'(z)$ and  $p'(s)$ (it exists by the same reason that $K$ exists in the previous proof). 

Now define
\begin{align*}
  \delta&=\min\left\{\min_{t\in [\tau ,t_0+T+\tilde T +2\tau ]}x(t), \frac{\varepsilon  e^{- (\max D) (T+\tilde T+2\tau)}}{2L  (1+M\tau)}\right\}>0,\\
  \mathcal{S}&=\left\{t\geq \tau: x(h)\geq \delta\text{ for all }h\in[\tau,t]\right\} 
\end{align*}
 where $\max D$ means the maximum of $D$. By definition  $\mathcal{S}$ is not empty since   $t_0+T+\tilde T +2\tau \in \mathcal{S}$. Denoting by $t^*=\sup\mathcal{S}$, we claim that $t^*=\infty$. For contradiction assume that 
$$ t_0+T+\tilde T+2\tau <t^*<\infty.$$
 Then by continuity, we get 
\begin{equation}\label{contradicciondelta}
  x(t^*)=\delta  .
\end{equation}
By definition,  it happens that 
$$\frac{d}{dt}\ln(x(t))\geq -D(t)x(t),$$
and then for $0\leq t\leq t^*  $, we get
$$ x(t^* )\geq x(t )e^{-\int_{t }^{t^*}D(r)\, dr}\geq x(t )e^{- (\max D) (t^* -t)} .$$
This implies that 
$$ x(t)\leq x(t^* )e^{ (\max D) (T+\tilde T+ 2\tau)} \leq \frac{\varepsilon}{2L(1+M\tau ) }$$
for all  $ t\in [t^*-T-\tilde T-2\tau,t^* ] $. Furthermore,  by   definition 
\begin{align*}
  y(t)&\leq \int_{t-\tau}^t x(h)p(s(h)) \,dh\leq  M \int_{t-\tau}^t x(h)  \,dh\leq\frac{ M \tau \varepsilon}{2L(1+M\tau )}
\end{align*}
for all  $t\in [t^*-T-\tilde T-\tau,t^* ]$. Therefore 
\begin{align*}
  |z(t)-s(t)|&\leq |z(t)-s(t)-x(t)-y(t)|+x(t)+y(t)\\
  &\leq \frac{\varepsilon}{2L}+\frac{\varepsilon}{2L(1+M\tau )}+\frac{ M \tau \varepsilon}{2L(1+M\tau )}\\
  &\leq \varepsilon/L 
\end{align*}
for all  $t\in [t^*-T-\tilde T-\tau,t^* ]$. By the mean value theorem, we get 
$$|p(z(t))-p(s(t ))|\leq L|z(t )-s(t )|<\varepsilon$$
for all $t\in [t^*-T-\tilde T-\tau,t^*]$. 
Then, by   \eqref{primeradesigualdad} and \eqref{segundadesigualdad}, we obtain
\begin{align*}
  |p(z(t ))\varphi(t)-p(s(t ))\psi(t)|&\leq |p(z(t ))\varphi(t)-p(z(t))\psi(t)| \\
  &\quad +|p(z(t ))\psi(t)-p(s(t ))\psi(t)|\\
  &\leq p(z(t ))|\varphi(t)-\psi(t)| +|p(z(t ))-p(s(t ))|\psi(t)\\
  &\leq M\frac{\eta}{4M}+\frac{\eta}{4 }\\
  &=\eta/2
\end{align*}
for all $t\in[t^*-T-\tau,t^*]$.  Finally, by hypothesis we have that 
$$\int_{t^*-T}^{t^*}\left(p(z(t-\tau))\varphi(t-\tau)-D(t)-\eta\right)\, dt\geq 0,$$
which implies that 
\begin{align*}
  \int_{t^*-T}^{t^*}\left(p(s(t-\tau))\psi(t-\tau)-D(t) \right)\, dt&\geq \int_{t^*-T}^{t^*}\left(p(z(t-\tau))\varphi(t-\tau)- \frac{\eta}{2} -D(t)\right) \, dt 
\end{align*}
is positive and then
\begin{align*}
  x(t^*)&=x(t^*-T)e^{\int_{t^*-T}^{t^*}\left(p(s(h-\tau))\psi(  h-\tau )-D(h)\right)\, dh} \\
  &\geq \delta e^{\int_{t^*-T}^{t^*}\left(p(s(h-\tau))\psi(  h-\tau )-D(h)\right)\, dh }\\
  &>\delta.
\end{align*}
This is a contradiction of \eqref{contradicciondelta}, then   $t^*=\infty$, and the theorem is proved.
\end{proof}

\section{Remaining proofs}\label{otraspruebas}

\subsection{Proof of Theorem \ref{persistenciaperiodica}}

We now assume that $\si$ and $D$ follow the hypothesis of Theorem \ref{persistenciaperiodica}. We need two  lemmas in order to prove this result,  although the first is a classical result, we provide the details here for the convenience of the reader.

\begin{lem}
There is a unique $\omega$-periodic solution $z$ of \eqref{derivadaz}.
\end{lem}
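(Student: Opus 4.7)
The plan is to reduce the claim to a fixed-point problem for the period-$\omega$ Poincar\'e map of the linear equation \eqref{derivadaz}. Using the explicit formula \eqref{formaz}, I evaluate at $t=\omega$ to obtain
$$z(\omega)=z(0)e^{-A}+C,\qquad A:=\int_0^\omega D(r)\,dr,\qquad C:=\int_0^\omega \si(h)D(h)e^{-\int_h^\omega D(r)\,dr}\,dh.$$
Thus the map $P\colon z(0)\mapsto z(\omega)$ is affine with contraction factor $e^{-A}$. Because $D$ is non-negative, $\omega$-periodic and not identically zero, $A>0$, so $e^{-A}\in(0,1)$ and $P$ has a unique fixed point $z^*=C/(1-e^{-A})$. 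Since $\si$ is positive and $D$ is non-null non-negative, $C>0$, and therefore $z^*>0$, which is consistent with the standing assumption $z(0)>0$.

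Next I promote the fixed-point identity $z(\omega)=z(0)$ to periodicity on the whole half-line. Setting $\tilde z(t):=z(t+\omega)$ and using that $\si$ and $D$ are $\omega$-periodic, $\tilde z$ satisfies the same linear ODE \eqref{derivadaz} as $z$, with $\tilde z(0)=z(\omega)=z(0)$. By uniqueness of solutions to the linear initial-value problem, $\tilde z\equiv z$, so $z(t+\omega)=z(t)$ for every $t\ge 0$. Conversely, any $\omega$-periodic solution of \eqref{derivadaz} must satisfy $z(0)=z(\omega)=P(z(0))$, which forces $z(0)=z^*$, and uniqueness of the IVP then determines $z$ entirely; this gives uniqueness among $\omega$-periodic solutions.

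I do not expect any real obstacle here: the statement is essentially a contraction argument for an affine map. The only step that requires a moment of care is verifying $A>0$ strictly, which uses the hypothesis that $D$ is non-null (not identically zero on one, hence every, period); and checking $z^*>0$, which uses positivity of $\si$ so that $C>0$.
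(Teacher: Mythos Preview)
Your argument is correct. The approach, however, differs from the paper's. The paper constructs the periodic solution directly as the improper integral
\[
z(t)=\int_{-\infty}^t \si(h)D(h)e^{-\int_h^t D(r)\,dr}\,dh,
\]
having extended $\si$ and $D$ periodically to all of $\R$, and then verifies $\omega$-periodicity by a change of variables; uniqueness is obtained by noting that the difference of two periodic solutions satisfies $(z-\tilde z)'=-D(t)(z-\tilde z)$, hence decays to zero, forcing $z=\tilde z$ by periodicity. Your route instead identifies the problem with the unique fixed point of the affine Poincar\'e map $z(0)\mapsto e^{-A}z(0)+C$, which is a contraction since $A>0$, and then invokes uniqueness of the linear IVP to upgrade $z(\omega)=z(0)$ to full periodicity. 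Both arguments are standard and of comparable length; the paper's version has the minor advantage of exhibiting $z$ by a closed formula valid for all $t$, while yours makes the role of the contraction constant $e^{-A}$ (and hence the necessity of the hypothesis that $D$ be non-null) slightly more transparent.
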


\begin{proof}
We  consider $\si$ and $D$ $\omega$-periodic with $\R$-domain. Then define
$$z(t)=\int_{-\infty}^t \si(h)D(h)e^{-\int_h^tD(r)\, dr}\,dh.$$
This function is well defined since $\si$ is upper bounded and $z$ satisfies \eqref{derivadaz}. Furthermore
\begin{align*}
  z(t+\omega)&=\int_{-\infty}^{t+\omega} \si(h)D(h)e^{-\int_h^{t+\omega}D(r)\, dr}\,dh\\
  &=\int_{-\infty}^{t} \si(h+\omega)D(h+\omega)e^{-\int_{h+\omega}^{t+\omega}D(r)\, dr}\,dh\\
  &=\int_{-\infty}^{t} \si(h )D(h )e^{-\int_{h }^{t}D(r+\omega)\, dr}\,dh\\
  &=z(t),
\end{align*}
and then it is $\omega$-periodic. Finally, assume there is another $\omega$-periodic solution $\tilde z$. Then 
$$\frac{d}{dt}z(t)-\tilde z(t)=-D(t)(z(t)-\tilde z(t)),$$
and
$$|z(t)-\tilde z(t)|=|z(0)-\tilde z(0)|e^{-\int_0^tD(r)\,dr}.$$
Since both are $\omega$-periodic they must be the same function.
\end{proof} 

\begin{lem}
Given $z$ $\omega$-periodic there is a unique $c$ (up to a constant factor) defined in \eqref{funcionc} such that $\varphi$ defined  in   \eqref{funcionphi} is $\omega$-periodic.
\end{lem}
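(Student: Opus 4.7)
The plan is to recast the problem as the search for a positive Floquet-type eigenfunction of a linear periodic delay equation, and apply the Krein--Rutman theorem. The case $\tau=0$ is trivial ($\varphi\equiv 1$ for every $c$), so the content is in $\tau>0$.

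\textbf{Step 1 (integrating factor).} Setting $u(t):=c(t)\exp\bigl(\int_0^t D(r)\,dr\bigr)$, the integral identity \eqref{funcionc} is equivalent to the cleaner linear delay equation
\[
u'(t)=a(t)\,u(t-\tau),\qquad a(t):=p(z(t-\tau)).
\]
Here $a$ is $\omega$-periodic and bounded below by a positive constant: the $\omega$-periodic $z$ from the previous lemma is strictly positive (since $\si$ is positive and $D$ is nontrivial, the integral representation of $z$ is strictly positive), and $p$ is strictly increasing with $p(0)=0$. A direct calculation shows that in the $u$-variables the definition \eqref{funcionphi} simplifies to $\varphi(t)=u(t)/u(t+\tau)$, the $D$-integrals cancelling.

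\textbf{Step 2 (reduction to a Floquet condition).} Next I would show that $\varphi$ is $\omega$-periodic if and only if there is a constant $\lambda>0$ with $u(t+\omega)=\lambda\,u(t)$ for all $t$. The ``if'' direction is immediate. For the converse, let $\rho(t):=u(t+\omega)/u(t)$; the $\omega$-periodicity of $\varphi$ becomes $\rho(t)=\rho(t+\tau)$, so $\rho$ is $\tau$-periodic. Since $a$ is $\omega$-periodic, both $u(\cdot+\omega)$ and $\rho(\cdot)u(\cdot)$ satisfy the delay equation; substituting $v=\rho u$ into $v'=a\,v(\cdot-\tau)$ and using $u'=a\,u(\cdot-\tau)$ yields
\[
\rho'(t)\,u(t)=a(t)\,u(t-\tau)\,[\rho(t-\tau)-\rho(t)]=0,
\]
so $\rho$ is a positive constant.

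\textbf{Step 3 (principal eigenvalue of the period map).} Define the Poincaré map $\Phi:C([-\tau,0])\to C([-\tau,0])$ by $\Phi(\phi)(s):=u_\phi(s+\omega)$, where $u_\phi$ is the unique solution of the DDE with initial datum $\phi$. A positive eigenvector $\Phi\phi=\lambda\phi$ corresponds exactly to a positive $u$ satisfying $u(t+\omega)=\lambda u(t)$, because $u_\phi(\cdot+\omega)$ and $\lambda u_\phi(\cdot)$ solve the same delay equation with matching data on $[-\tau,0]$. The map $\Phi$ is linear and positive (the DDE preserves non-negativity). Some iterate $\Phi^N$ (with $N\omega\geq\tau$) is compact via Arzelà--Ascoli applied to the Volterra-type integral form of the equation, and strongly positive because the uniform lower bound on $a$ forces every nontrivial non-negative datum to evolve into a strictly positive function on $[\tau,\infty)$. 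The Krein--Rutman theorem then yields a simple principal eigenvalue $\lambda>0$ with a strictly positive eigenfunction, unique up to positive scalar multiple.

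\textbf{Step 4 (conclusion).} Translating back via $c(t)=u(t)\exp\bigl(-\int_0^t D(r)\,dr\bigr)$ produces a non-negative $c$ with $c(0)>0$ and $\omega$-periodic $\varphi$. Uniqueness up to a constant factor follows from the uniqueness of the principal Krein--Rutman eigenvector.

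The main obstacle is making the Krein--Rutman step fully rigorous: verifying the compactness of a suitable iterate of $\Phi$ (harmless when $\omega\geq\tau$, but requiring iteration when $\omega<\tau$), and checking the strong-positivity/irreducibility hypothesis needed to conclude simplicity of the principal eigenvalue and hence uniqueness of the positive eigenfunction.
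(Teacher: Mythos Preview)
Your approach is correct and genuinely different from the paper's. The paper does not use Krein--Rutman at all: instead it fixes an arbitrary $c$, forms $\psi_n(t):=\psi(t+n\omega)$, and uses the contraction estimate of Lemma~\ref{lemacrucial} to show that $(\psi_n)$ is Cauchy in $C([0,\omega])$; the limit $\varphi$ is then shown to be $\omega$-periodic and to satisfy the fixed-point identity $\varphi(t)=\exp(-\int_{t-\tau}^{t}p(z)\varphi)$, after which $c$ is reconstructed explicitly as $c(t)=\exp\bigl(\int_0^t(-D+p(z(\cdot-\tau))\varphi(\cdot-\tau))\bigr)$. Uniqueness is again obtained from Lemma~\ref{lemacrucial} (forcing $\varphi_1=\varphi_2$) followed by the direct computation $(c_1/c_2)'=0$.

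Your route---the integrating factor $u=c\,e^{\int D}$, the reduction to $u'(t)=a(t)u(t-\tau)$, the equivalence ``$\varphi$ periodic $\Leftrightarrow$ $u$ has a Floquet multiplier'', and Krein--Rutman for the period map---is cleaner and more structural, and does not need Lemma~\ref{lemacrucial}. The paper's argument, on the other hand, is self-contained and economical in context, since Lemma~\ref{lemacrucial} is already proved for the main persistence theorem and is simply reused here. Two small points worth tightening in your write-up: the backward step giving $u_1=\mu u_2$ on all of $[-\tau,\infty)$ (not just from the eigenfunction window forward) relies on $a>0$ to invert $u'(t)=a(t)u(t-\tau)$ one $\tau$-step at a time; and in Step~2 the identity $\rho'(t)u(t)=0$ should first be established for $t\geq\tau$ (where $\rho(t-\tau)$ is defined) and then extended to $t\geq0$ by the $\tau$-periodicity of $\rho$.
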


\begin{proof}
Fix $c$ with $c(0)>0$ and let $\psi:[-\tau,\infty)$  be defined as
$$\psi(t)=\frac{c(t)}{c(t+\tau)}e^{-\int_{t }^{t+\tau}D(r)\,dr} .$$ 
  For all $n\in\N$ define $\psi_n\in C([0,\omega],(0,1])$ by
$$\psi_n(t)=\psi(t+n\omega) $$
with the norm $\|\psi_n\|=\max_{t\in[0,\omega]}|\psi_n(t)|.$ Therefore we claim that $(\psi_n)_{n\in\N}$ is a Cauchy sequence. To see this fix $\varepsilon>0$. Let $n_0\in\N$ be such that
$$3M\sqrt{\frac{   n_0\omega}{\inf f}}\left(1-e^{-M\tau}\right)^{n_0\omega/(2\tau)-1/2}<\varepsilon $$
and assume that $n_0\leq m< n$. By Lemma \ref{propiedadphi} we have that 
\begin{align*}
  \psi(t)&=e^{-\int^t_{t-\tau}p(z(h))\psi(h)\, dh},\\
  \psi(t+(n-m)\omega)&=e^{-\int^t_{t-\tau}p(z(h))\psi(h+(n-m)\omega)\, dh}  
\end{align*}
for all $t\geq 0$ since $p(z)$ is $\omega$-periodic. By Lemma \ref{lemacrucial}, if   $t\in[0,\omega]$, then
\begin{align*}
  |\psi_m(t)-\psi_n(t)|&=|\psi (t+m\omega)-\psi (t+m\omega+(n-m)\omega)|\\
  &<3M\sqrt{\frac{    t+m\omega }{\inf f}}\left(1-e^{-M\tau}\right)^{(t+m\omega )/(2\tau)-1/2}\\
  &<\varepsilon,
\end{align*}
and then $\|\psi_m -\psi_n\|<\varepsilon$, which proves it is a Cauchy sequence. Then define $\varphi:[-\tau,\infty)\to[0,1]$ by
$$\varphi(t)=\lim_{n\to\infty}\psi_n (t ).$$
It is $\omega$-periodic since
\begin{align*}
  |\varphi(t)-\varphi(t+\omega)|&\leq |\varphi(t)-\psi_n(t)|+|\psi_n(t)-\psi_{n+1}(t)|+|\psi_n(t+\omega)-\varphi(t+\omega)|,
\end{align*}
which tends to zero as $n$ goes to infinity. Note that 
\begin{align*}
  \left| \int_{t-\tau}^tp(z(h))\psi_n(h)\,dh -\int_{t-\tau}^tp(z(h))\varphi(h)\,dh \right|&\leq M\left| \int_{t-\tau}^t \psi_n(h)-\varphi(h)\,dh \right|\\
  &\leq M\tau \|\psi_n-\varphi\|,
\end{align*}
and therefore
\begin{align*}
  \left|\varphi(t)-e^{-\int_{t-\tau}^tp(z(h))\varphi(h)\,dh}\right|&\leq |\varphi(t)-\psi_n(t)|\\
  &\quad +\left|e^{-\int_{t-\tau}^tp(z(h))\psi_n(h)\,dh} -e^{-\int_{t-\tau}^tp(z(h))\varphi(h)\,dh}\right|\\
  &\leq (1+M\tau)\|\psi_n-\varphi\|,
\end{align*}
which again tends to zero as $n$ goes to infinity. Then $\varphi$ satisfies 
\begin{equation}\label{propiedadlimite}
  \varphi(t)=e^{-\int_{t-\tau}^tp(z(h))\varphi(h)\,dh}
\end{equation} 
for all $t$.

It remains to prove there is a unique $c$ (up to a constant factor) that generates $\varphi$. Firstly define
$$c(t)=e^{\int_{0}^t(-D(h)+p(z(h-\tau))\varphi(h-\tau))\,dh}.$$
Then
\begin{align*}
  \frac{c(t)}{c(t-\tau)}e^{-\int_{t}^{t+\tau}D(h)\, dh}&=\frac{e^{\int_{0}^t(-D(h)+p(z(h-\tau))\varphi(h-\tau))\,dh-\int_{t}^{t+\tau}D(h)\, dh}}{e^{\int_{0}^{t+\tau}(-D(h)+p(z(h-\tau))\varphi(h-\tau))\,dh}}\\
  &=e^{-\int_{t}^{t+\tau} p(z(h-\tau))\varphi(h-\tau)\,dh}\\
  &=\varphi(t)
\end{align*} 
by property \eqref{propiedadlimite}. Suppose now there are $c_1$ and $c_2$ that generates $\varphi_1$ and $\varphi_2$, respectively, with both $\omega$-periodic. Since both are periodic, by Lemma \ref{lemacrucial}, we get that $\varphi_1=\varphi_2$ and then
\begin{align*}
  \frac{d}{dt}\frac{c_1(t)}{c_2(t)}&=\frac{-D(t)c_1(t)+c_1(t-\tau)p(z(t-\tau))e^{-\int_{t-\tau}^t D(r)\,dr}}{c_2(t)}\\
  &\quad- c_1(t) \frac{-D(t)c_2(t)+c_2(t-\tau)p(z(t-\tau))e^{-\int_{t-\tau}^t D(r)\,dr}}{c_2^2(t)}\\
  &=\frac{c_1(t)p(z(t-\tau))}{c_2(t)}\left(\varphi_1(t)-\varphi_2(t)\right)\\
  &=0
\end{align*}
and the lemma is proved.
\end{proof}
 
\begin{proof}[Proof of Theorem \ref{persistenciaperiodica}] 
With the previous two lemmas we get the existence and uniqueness of $z$ and $\varphi$. We claim that   there are positive constants   $\eta$ and $T$ such that 
$$
  \int_{t_1}^{t_2} p(z(t-\tau)) \varphi(t-\tau)\, dt> \int_{t_1}^{t_2}\left( D(t)+\eta \right) \, dt
$$
for all $t_1> T$, $t_2-t_1> T$  if and only if
\begin{equation}\label{desigualdadpromedio}
  \langle p(z ) \varphi \rangle>\langle  D \rangle. 
\end{equation}
To see this first fix $\eta$ and $T$ as in the statement and  consider $N\in\N$ such that $N\omega>T$. Then
\begin{align*}
  \langle p(z) \varphi \rangle& = \frac{1}{N\omega}  \int_{t_1-\tau }^{t_1+N\omega -\tau} p(z(t  )) \varphi(t )\, dt\\
  & = \frac{1}{N\omega}  \int_{t_1 }^{t_1+N\omega } p(z(t -\tau)) \varphi(t-\tau)\, dt \\
  &>\frac{1}{N\omega}  \int_{t_1}^{t_1+N\omega}\left( D(t)+\eta \right) \, dt\\
  &>\langle  D \rangle.
\end{align*}
For the other half, if we assume \eqref{desigualdadpromedio}, then there is $\eta>0$ such that
$$ \langle p(z) \varphi \rangle>\langle  D \rangle+2\eta.$$
Fix $n\in\N$ such that $n\geq  \max D /\eta+1 $, and define $T=n\omega $. If $t_2-T> t_1>T$ there is $m\geq n$ such that $t_2\in[t_1+m\omega,t_1+(m+1)\omega].$ Then
\begin{align*}
  \int_{t_1}^{t_2} p(z(t-\tau)) \varphi(t-\tau)\, dt&\geq  \int_{t_1}^{t_1+m\omega} p(z(t-\tau)) \varphi(t-\tau)\, dt\\
  &=m\omega\langle p(z) \varphi \rangle\\
  &>m\omega\left(\langle  D \rangle+2\eta\right)\\
  &=\int_{t_1}^{t_2} \left(D(t)+ \eta\right)  \, dt+m\omega\eta  -\int_{t_1+m\omega}^{t_2}\left( D(t)+ \eta \right)  \, dt\\
  &\geq \int_{t_1}^{t_2}\left( D(t)+ \eta \right) \, dt+n\omega\eta+ -\omega( \max D +\eta)\\
   &\geq \int_{t_1}^{t_2}\left( D(t)+ \eta \right) \, dt.
\end{align*}
Finally, applying  Theorem \ref{persistencia} we get  the desired result.
\end{proof}

\subsection{Proof of Proposition \ref{implicacion}}
To end this article we provide the last proof.

\begin{proof} 
Since by hypothesis System \ref{sistemaprincipal} is persistent, by Theorem \ref{persistencia} we know that there are positive constants $\eta$ and $T$ such that
$$\int_{t_1}^{t_2} p(z)\varphi\,dt=p(z)\varphi(t_2-t_1)>\int_{t_1}^{t_2}\left(D(t)+\eta \right)\,dt$$
for all $t_1>T$ and $t_2>t_1+T$ with $\varphi$ constant such that $\varphi=e^{-\tau p(z)\varphi}.$
We can assume that  $T$ is large enough so that  
$$ T\geq \frac{2}{  \eta}\max_{t\geq 0}\{p(z),D(t) /\varphi\}. $$ 
Also note that 
$$e^{-a}-e^{-b} \geq e^{-b}(b-a),$$
no matter the sing of $b-a$. Therefore 
\begin{align*}
  \int_{t_1}^{t_2}\left(e^{-\int_{t-\tau}^tD(r)\,dr}-\varphi\right)\, dt&=\int_{t_1}^{t_2}\left(e^{-\int_{t-\tau}^tD(r)\,dr}-e^{-\tau p(z)\varphi}\right)\, dt\\
  &\geq\int_{t_1}^{t_2}e^{-\tau p(z)\varphi}\left( \tau p(z) \varphi  -\int_{t-\tau}^tD(h)\,dh\right)\, dt\\
  &=\varphi \int_{t_1}^{t_2} \int_{t-\tau}^t  \left( p(z)\varphi  - D(h)\right)\,dh  dt\\
  &=\varphi \left(\int_{t_1-\tau}^{t_1} \left(p(z) \varphi  - D(h)\right)\int_{t_1}^{h+\tau} 1\, dtdh\right.\\
  &\quad +\int_{t_1 }^{t_2-\tau} \left(p(z) \varphi  - D(h)\right)\int_{h}^{h+\tau} 1\, dtdh\\
  &\quad\left.+\int_{t_2-\tau}^{t_2} \left(p(z) \varphi  - D(h)\right)\int_{h}^{t_2} 1\, dtdh\right)\\
  &\geq \varphi\tau \int_{t_1}^{t_2}\left( p(z)\varphi-D(h)\right)\,dh\\
  &\quad -2\varphi\tau  \max_{t\geq 0}\{p(z),D(t) /\varphi\}  \\
  &=\varphi\tau\left(\eta T -2  \max_{t\geq 0}\{p(z),D(t) /\varphi\}  \right) \\
  &\geq  0.  
\end{align*}
Finally,
$$\int_{t_1}^{t_2} p(z)e^{-\int_{t-\tau}^tD(r)\,dr}\,dt\geq\int_{t_1}^{t_2} p(z)\varphi\,dt >\int_{t_1}^{t_2}\left(D(t)+\eta \right)\,dt,$$
and the proposition is proved.
\end{proof}

 \section*{Acknowledgments}  The author would like to thank  Professor P. Amster for his valuable remarks on this manuscript.    This work was partially supported by  Conicet  under grant PIP 11220200100175CO and by project TOMENADE [MATH-AmSud, 21-MATH-08].
%
%
%
%
%
%
%

\addcontentsline{toc}{chapter}{Bibliograf\'ia}
\bibliography{biblio}
\bibliographystyle{plain}
\end{document}